\newtheorem{prop}{Proposition}[section]
\newtheorem{lem}[prop]{Lemma}
\newtheorem*{thm2}{Theorem}
\newtheorem{cor3}{Corollary}
\theoremstyle{definition}
\newtheorem{rem}[prop]{Remark}
\newtheorem*{ack}{Acknowledgement}
\newcommand{\AAA}{\mathcal A}
\newcommand{\BB}{\mathcal B}
\newcommand{\CC}{\mathcal C}
\newcommand{\rmd}{\mathrm d}
\newcommand{\DD}{\mathcal D}
\newcommand{\EE}{\mathcal E}
\newcommand{\rme}{\mathrm e}
\newcommand{\LLL}{\mathscr L}
\newcommand{\MM}{\mathcal M}
\newcommand{\NN}{\mathcal N}
\newcommand{\N}{\mathbb N}
\newcommand{\R}{\mathbb R}
\newcommand{\UU}{\mathcal U}
\newcommand{\WW}{\mathcal W}
\newcommand{\Z}{\mathbb Z}
\newcommand{\lra}{\longrightarrow}
\newcommand{\ra}{\rightarrow}
\DeclareMathOperator{\dist}{\mathrm{dist}}
\DeclareMathOperator{\ev}{\mathrm{ev}}
\DeclareMathOperator{\grad}{\mathrm{grad}}
\DeclareMathOperator{\Hess}{\mathrm{Hess}}
\DeclareMathOperator{\ind}{\mathrm{ind}}
\DeclareMathOperator{\inj}{\mathrm{inj}}
\DeclareMathOperator{\length}{\mathrm{length}}
\DeclareMathOperator{\loc}{\mathrm{loc}}
\begin{document}

\author{Stefan Suhr}
\address{DMA, \'Ecole Normale sup\'erieure and Universit\'e Paris-Dauphine,
45 rue d'Ulm, 75230 Paris Cedex 05, France}
\email{stefan.suhr@ens.fr}
\author{Kai Zehmisch}
\address{Mathematisches Institut, Westf\"alische Wilhelms-Universit\"at M\"unster, Einsteinstr. 62,
D-48149 M\"unster, Germany}
\email{kai.zehmisch@wwu.de}

\title[Linking and closed orbits]{Linking and closed orbits}

\date{18.12.2015}

\begin{abstract}
  We show that the Lagrangian of classical mechanics
  on a Riemannian manifold
  of bounded geometry
  carries a periodic solution of motion
  with prescribed energy,
  provided the potential satisfies an asymptotic growth
  condition,
  changes sign,
  and the negative set of the potential
  is non-trivial
  in the relative homology.
\end{abstract}

\subjclass[2010]{70H12 (37J55)}
\thanks{
  This research was supported through the programme
  {\it Research in Pairs} by the {\it Mathematisches Forschungsinstitut Oberwolfach}
  in February 2013.
  S. Suhr: The research leading to these results has received funding from the
  European Research Council under the European Union's Seventh Framework
  Programme (FP/2007-2013) / ERC Grant Agreement 307062.
  K. Zehmisch is partially supported by DFG grant ZE 992/1-1.  
  }

\maketitle


\section{Introduction\label{intro}}

In Hamiltonian mechanics one considers
integral curves $u=u(t)$
of the Hamiltonian vector field $X_H$
for the {\bf Hamiltonian}
\[
H(u)=\frac12 |u|^2+V\big(\pi(u)\big)
\]
on the phase space $T^*Q$.
The motion of particles
in the configuration space $Q$
is described by $q(t)=\pi\big(u(t)\big)$
for the projection $\pi$
of the cotangent bundle.
The kinetic energy $\frac12 |\,.\,|^2$
is defined via the (dual) norm of a Riemannian metric
$g$ on $Q$.
The potential energy $V$
is a smooth function on $Q$.
For the Liouville $1$-form $\lambda$
on $T^*Q$ the Hamiltonian vector field $X_H$
is determined by $i_{X_H}\rmd\lambda=-\rmd H$,
i.e.\ the trajectories locally
solve the Hamilton equations.

The total energy $H(u)$ along a solution
$u=u(t)$ is preserved.
So in order to understand
the dynamics of the Hamiltonian vector field
one can restrict to hypersurfaces
$M=\{H=E\}$ of fixed energy $E$.
An important question is the existence
of periodic solutions $u=u(t)$ on a regular
{\bf energy surface} $M$.
If the energy $E$ is greater than $\sup_QV$
the Hamiltonian flow on $M$
is conjugate to the geodesic flow
of the Jacobi metric $(E-V)^{-1}g$
according to the Euler-Maupertuis-Jacobi principle.
If additionally $Q$ is compact
the existence of periodic orbits follows from
the existence of closed geodesics
for the Jacobi metric,
see \cite[Section 4.4]{hoze94}.
Existence of closed geodesics
on compact Riemannian manifolds is
well known,
cf.\ \cite{{kli78},bang85}.
In the case that the energy surface $M$
intersects the zero section $Q$ of $T^*Q$
the Jacobi metric becomes singular along $\{V=E\}$.
As explained in \cite[Section 4.4]{hoze94}
periodic orbits on compact energy surfaces $M$
can be found via so-called brake orbits
of a perturbed Jacobi metric,
see \cite{bolo78,bolkoz78,glzi83,benci84}.

Alternative approaches to periodic
orbits on a compact energy surfaces
via symplectic
geometry are given in
\cite{vit87,hovi88,hoze94,vit99,frauschl07,geizeh12}.
Rabinowitz applied the minmax method
to the Lagrangian multiplier functional for the
symplectic action $\int\rmd\lambda$
on $1$-periodic curves $u=u(t)$
constraint by $\int H\big(u(t)\big)\rmd t=0$,
see \cite{rabi79},
which led to the Rabinowitz-Floer theory
\cite{cielfrau09}.

Periodic orbits do not always exist
if the energy surfaces $M$
is non-compact.
For example
the Hamiltonians
$\frac12p^2-q$ and
$\frac12p^2-\frac12q^2$ on $T^*\R$
do not allow any closed orbit.
Moreover,
if the potential equals $-E<0$
periodic orbits correspond
precisely to closed geodesics via $\pi$,
cf.\ \cite{geig08}.
On the other hand
if $\{V<E\}$
is connected with disconnected boundary
and the potential satisfies
asymptotic growths conditions
periodic solutions are obtained in \cite{off87}
using the Dirichlet principle
for the energy functional
of the Jacobi metric.

As it was shown in \cite{bpv09}
in order to obtain periodic orbits
the Lagrangian action
functional can be used directly.
For non-compact energy surfaces $M$
periodic solutions do exists
if the potential function on $\R^n$
satisfies certain
topological and asymptotic conditions.
In \cite{bprv13} the authors generalized
the existence result
to Riemannian manifolds with flat ends
that satisfy a vanishing condition
on the free loop space homology.
The aim of these notes is to generalize the results
obtained in \cite{bpv09,bprv13}
to Riemannian manifolds of bounded geometry
without restrictions on the topology of the loop space.
As we will discuss in Section \ref{reeb}
all the above mentioned classical
existence results
with a sign changing potential are
consequences of our theorem.

\subsection{The theorem}

We consider a connected manifold $Q$ of dimension $n$
together with a Riemannian
metric $g=\langle\,.\,,\,.\,\rangle$
of {\bf bounded geometry} in the following sense:
We require that the {\bf injectivity radius}
\[
\inj g>0,\label{inj}\tag{INJ}
\]
which by definition is the infimum of the injectivity
radii of all points of $Q$.
In particular,
the Riemannian manifold
$(Q,g)$
is complete,
i.e.
the geodesic flow is global.
Moreover,
we require that there exists a positive constant $C$
such that the
Ricci tensor $\mathrm{Ric}$
satisfies
\[
|\mathrm{Ric}|<C, \label{cb}\tag{CB}
\]
where $|\,.\,|$ denotes the norm
induced by the metric.

We replace the {\bf potential} $V$
by $V-E$ and require that
\[
0
\,\,\text{is a regular value of}\,\,
V
\,\,\text{and}\,\,
\{V=0\}
\,\,\text{is not empty.}\,\,
\label{reg}\tag{REG}
\]
This
is equivalent to the requirement
that $0$ is a regular value of $H$
and that the energy surface
$M=\{H=0\}$ intersects the zero section non-trivially.
We call $N=\{V<0\}$ the {\bf negative set}.
Notice, that the closure equals $\pi(M)$.
We require that
the relative homology group
of the negative set 
\[
H_*(N,\partial N)\neq 0
\label{lnk}\tag{LNK}
\]
is non-trivial
for some degree $*=1,\ldots,n$.

In addition,
the potential is required to satisfy the following
{\bf asymptotic conditions}:
We denote by $\grad V$ the gradient of the potential $V$,
which is the dual vector field of the differential $TV$
w.r.t.\ the metric,
and by $\nabla$ the covariant derivative w.r.t.\ the
Levi-Civita connection.
The second covariant derivative $\nabla TV$,
the so-called {\bf Hessian},
is a
symmetric bilinear form
which we denote by $\Hess V$.
We require that there exist a positive constant $K$
and a compact subset
$\hat{Q}$ of $Q$ such that
\[
|\grad V|\geq\frac1K
\qquad\text{on}\;\;Q\setminus\hat{Q}
\label{ac1a}\tag{AC$_1^a$}
\]
and
\[
\frac{|\Hess V|}{|\grad V|}\leq K
\qquad\text{on}\;\;Q\setminus\hat{Q}.
\label{ac1b}\tag{AC$_1^b$}
\]
To formulate the second asymptotic condition
we distinguish a base point $o$ in $\hat{Q}$
and denote by $\dist(o,q)$ the distance
to $q\in Q$,
i.e. the minimal {\bf length}
\[
\length(c)=\int\!|\dot c|\rmd t
\]
of piecewise $C^1$-curves $c$
connecting $o$ with $q$.
We require that
\[
\frac{|\Hess_qV|}{|\grad_qV|}\lra 0
\qquad\text{if}\;\;\dist(o,q)\ra\infty.
\label{ac2}\tag{AC$_2$}
\]
Of course \eqref{ac2} implies \eqref{ac1b}.
If $Q$ is compact the
asymptotic conditions are automatically satisfied.

Assuming the above requirements
the aim of these notes is to find
periodic critical points $q=q(t)$
of the action
with the {\bf Lagrangian}
\[
L(v)=\frac12 |v|^2+U\big(\pi(v)\big)
\]
constraint to the energy condition
$\frac12|\dot q|^2+V(q)=0$.
Here we set $U=-V$ and
$\pi$ denotes the projection of the tangent bundle of $Q$.
Notice, that the period of $q=q(t)$
is free to vary
but in view of \eqref{reg} it has to be non-zero.
The Euler-Lagrange equation for the critical points
are given by
$
\ddot q=\grad_qU
$,
where $\ddot q=\nabla_{\!\dot q\,}\dot q$
is the covariant derivative along the curve $q=q(t)$,
see Section \ref{critpoint}.

\begin{thm2}
  \label{mthm}
  Under the assumptions
  \eqref{inj} -- \eqref{ac2}
  the equation of motion
  \[
  \ddot q=\grad_qU
  \]
  has a contractible periodic solution $q=q(t)$
  such that $\frac12|\dot q|^2=U(q)$.
\end{thm2}

Solutions are contained in the closure of the negative set
and, moreover,
are in one to one correspondence
with closed integral curves $u=u(t)$
of the Hamiltonian
vector field $X_H$ on the energy surface $M$
via the Legendre transformation.
The correspondences is given by $q(t)=\pi\big(u(t)\big)$,
cf.\ \cite{abbschw06}.
If $Q$ is a line or a circle periodic solutions
exists if and only if the negative set has compact
closure.
In this case
the Hamiltonian lift $u=u(t)$
of $q=q(t)$ is always non-contractible.

\begin{rem}
  If $\pi_1(N)$ injects into $\pi_1(Q)$
  and if the dimension of $Q$ is greater than one
  the contractibility of $q=q(t)$
  implies the contractibility of $u=u(t)$ in $M$.
  Indeed,
  a perturbation of $u=u(t)$ can be brought
  into general position to be disjoint from
  $\partial N\subset M$.
  The projection $q=q(t)$
  is contained in a contractible
  neighbourhood over which the
  unit cotangent bundle $ST^*Q\simeq M$
  is trivial so that $u=u(t)$
  is homotopic to $\big(q_*,p(t)\big)$
  for a point $q_*$ in $Q$.
  Connecting $q_*$ with the boundary of $N$
  shows that the fibre over $q_*$ bounds
  a $n$-disc inside $M$,
  which can be used to contract $u=u(t)$.
\end{rem}

In order to  prove the theorem
we consider the Lagrangian action
\[
\int_0^TL\big(\dot q(t)\big)\rmd t
\]
for positive real numbers $T$
and $T$-periodic curves $q=q(t)$ in $Q$.
Critical points are the solutions
claimed in the theorem.
Following \cite{bpv09} we reparametrize the curves
by setting
\[
x(t)=q\big(\rme^{\tau}t\big)
\]
with $T=\rme^{\tau}$.
This results in the functional
\[
\frac{\rme^{-\tau}}{2} \int_0^1|\dot x|^2\rmd t+\rme^{\tau}\int_0^1U(x)\rmd t
\]
for real numbers $\tau$
and $1$-periodic curves $x=x(t)$ in $Q$.
The critical points are solutions of
$\ddot x=\rme^{2\tau}\grad_xU$
such that $\frac12|\dot x|^2=\rme^{2\tau}U(x)$,
see Section \ref{critpoint}.
Periodic solutions are in
one to one correspondence
with $T$-periodic solutions in the theorem.
We will prove that solutions exist
provided the conditions \eqref{inj} -- \eqref{ac2}
are satisfied.

In Section \ref{action} we describe
the variational problem.
In Section \ref{comp}
we study compactness properties
of the penalized action.
Notice,
that during the depenalization
process in Lemma \ref{boudbelow}
we use a comparison argument
in contrast to the asymptotic flow box
in \cite{bpv09,bprv13}.
This allows us to weaken the assumption
on the metric.
In Section \ref{mountain}
we prove existence of critical points
of the penalized problem.
We use a linking construction
as in \cite{bpv09,bprv13}
but in a different way.
A rearrangement of Morse handles
of the negative set
allows us to apply gradient
flow methods directly.
Therefore,
no requirements on the homology
of the loop space as in \cite{bprv13}
are necessary,
cf.\ Section \ref{reeb}.

\subsection{Reeb orbits\label{reeb}}

The kernel of the canonical symplectic form
$\rmd\lambda$ on $T^*Q$ restricted to
$TM$ defines a $1$-dimensional distribution
the so-called {\bf characteristic distribution}
on the energy surface $M$.
A closed leaf of the foliation
is called a {\bf closed characteristic}.
Because the characteristic foliation
is integrated by the Hamiltonian vector field
$X_H$ the theorem implies:

\begin{cor3}
  Assuming \eqref{inj} -- \eqref{ac2}
  the energy surface $M$ carries a closed characteristic.
\end{cor3}

We remark that the energy surface $M$
of the Hamiltonian $H$ is of restricted contact type
in $T^*Q$.
The Liouville form $\lambda$
in the direction of $X_H$
is twice the kinetic energy $|\,.\,|^2$.
This follows because $X_H$
is the sum of $-1$ times
the cogeodesic vector field
and the $\rmd\lambda$-dual of $-\pi^*V$,
cf.\ \cite{geig08}.
Therefore, if $M$ is disjoint from the zero section
the restriction of $\lambda$ to the characteristic
distribution is positive,
i.e. $M$ is of {\bf restricted contact type},
see \cite{hoze94,cie98}.
In the alternative case
it suffices to find a function
$f$ on $T^*Q$
such that $\lambda+\rmd f$ is positive
along the restriction of $X_H$
to $M$.
Because $X_H$ is nowhere tangent to $Q$
along $\partial N$
we can use flow box neighbourhoods
for $X_H$ on $M$
which cover $M\cap Q$,
a partition of unity,
and the fundamental theorem of calculus
to construct a function $h$
on $M$ such that
$\rmd h(X_H)>-\lambda(X_H)$.
An extension of $h$ to $T^*Q$
yields $f$ as desired.
In particular, the restriction $\alpha$
of $\lambda+\rmd f$
to $TM$ defines a contact form on $M$
together with its {\bf Reeb vector field} $R$
which is defined by
$i_R\rmd\alpha=0$
and $\alpha(R)=1$.
In particular,
$R$ is tangent to
the characteristic foliation.
The integral curves are called
{\bf Reeb orbits}.
Therefore, the corollary ensures
existence of closed Reeb orbits
in the present situation.
For more on closed Reeb orbits
the reader is referred to,
cf.\ \cite{hoze94,gin05,pas12}.

Notice,
that if a component of the negative
set $N$ has compact closure
the existence of a periodic orbit
follows from the classical
results mentioned above.
To see this replace $Q$
with the double of the manifold
with boundary $\bar{N}$.
On the complement of $\bar{N}$
define the metric and the potential suitably
such that the new negative
set coincides with $N$.
Alternatively,
on can glue $\partial N\times [0,\infty)$
to $N$ along the boundary
using a collar neighbourhood induced by
$\grad V/|\grad V|^2$.
The result can be provided with a metric
which interpolates to the product metric on
the cylindrical end.
Further the potential is the projection to the $\R$-factor on
$\partial N\times [0,\infty)$.
Then the classical existence results
follow from the theorem
because $(N,\partial N)$
carries a fundamental class with coefficients modulo $2$.

\begin{cor3}
  If the energy surface $M$
  is compact then there exists
  a closed characteristic on $M$
  which is contractible
  provided the dimension of
  $M$ is at least two.
\end{cor3}

\begin{rem}
  More generally,
  the gluing of 
  $\partial N\times [0,\infty)$
  to $N$
  as described above
  yields a Riemannian manifold $Q$
  with bounded geometry
  provided $\partial N$
  is compact.
  Consequently,
  the connected components of $M$
  for which the intersection
  with $Q$ is compact
  and
  whose image under $\pi$
  satisfy the homological condition
  \eqref{lnk}
  carry a (contractible)
  closed characteristic.
  Therefore, {\it a priori}
  we can assume that
  those components $N'$
  are replaced with
  $\partial N'\times (-\infty,0]$
  where the potential is interpolated
  to the projection to the
  $\R$-component in a
  bounded neighbourhood of
  $\partial N'$.
\end{rem}

We claim that the theorem implies
the main result of \cite{bprv13}.
This is of interest only
if all connected components of $\bar{N}$
are non-compact.
Assuming this we claim that
\[
H_k(N,\partial N)\cong
H_{k+n-1}(M)
\]
naturally
for $2\leq k\leq n$
provided $Q$ is orientable
or the coefficients are taken modulo $2$.
As Geiges pointed out to us
a proof can be given as follows:
Denote by $D'$ the unit codisc bundle
of $Q$ restricted to $\partial N$
and its boundary sphere bundle by $S'$.
Similarly, we denote the induce sphere
bundle over $N$
by $S$.
W.l.o.g.\
we can assume that $M$
equals $S\cup_{S'}D'$.
By the excision property of homology
we have that
$H_{k+n-1}(S,S')$
is isomorphic to
$H_{k+n-1}(M,\partial N)$.
Therefore,
it suffices to prove
$H_k(N,\partial N)\cong H_{k+n-1}(S,S')$,
which is obtained
with the
Gysin sequence for the bundle pair $(S,S')$
over $(N,\partial N)$,
see \cite[Proposition 12.1]{dld72}.
Because $\bar{N}$ is not compact
this holds for $k=1$ as well.
Therefore,
the above isomorphism is correct for $k=1$
provided $\partial N$
is not compact.
In the compact case
we observe that $H_n(M)$
injects into $H_n(M,\partial N)$
and hence into $H_1(N,\partial N)$.
In conclusion, if the homology
of $M$ is non-trivial
for some degree $*=n,\ldots,2n-1$
the condition \eqref{lnk} is satisfied.


\section{The Lagrangian action\label{action}}

\subsection{Admissible curves}{\label{admcurves}}

We identify the circle $S^1$ with the $1$-dimensional torus $\R/\Z$
and fix an isometric embedding of $Q$ into a Euclidean space $\R^N$
as it is possible by a theorem of Nash \cite[Theorem 3]{nash56},
cf.\ also \cite{gun89,gun91}.
The Hilbert manifold $H^1$ of absolutely continuous maps $S^1\ra Q$
with square integrable derivative
can be obtained as a submanifold of $H^1(S^1,\R^N)$,
cf.\ \cite{kli78}.
The tangent space $T_xH^1$ is spanned
by vector fields $\xi\in H^1(x^*TQ)$ along the loop $x\in H^1$.
The Riemannian metric $\langle\,.\,,\,.\,\rangle$ on $Q$
defines a Riemannian structure on $H^1$ via
\[
\langle\xi,\eta\rangle_1=
\int_0^1\langle\xi,\eta\rangle\rmd t+\int_0^1\langle\dot\xi,\dot\eta\rangle\rmd t,
\]
where we denote with $\dot\xi$
the covariant derivative $\nabla_{\!\dot x\,}\xi$
along the curve $x$.
The distance between $x$ and $y$ in $H^1$
equals the minimal length of curves
connecting $x$ and $y$.
This gives $H^1$ the structure of a complete metric space.
This follows with \cite[Theorem 1.4.5]{kli78}
where only the completeness of $Q$ is used.

Denote by $\MM\subset H^1$ the submanifold
which consists of contractible loops $x$ in $Q$.
Notice that $\MM$ is the connected component
of the isometrically embedded
and totally geodesic submanifold of point curves again denoted by
$Q$ in $H^1$.

\subsection{The parametrized Lagrangian action functional}{\label{paramlag}}

The {\bf energy}
\[
\EE(x)=\frac12 \int_0^1|\dot x|^2\rmd t
\]
and the {\bf potential integral}
\[
\UU(x)=\int_0^1U(x)\rmd t
\]
of a curve $x$ define smooth functions on $H^1$.
For real parameters $\tau$ the {\bf Lagrangian action}
is defined by
\[
\LLL(x,\tau)=\rme^{-\tau}\EE(x)+\rme^{\tau}\UU(x).
\]
Its restriction to $\MM$
is also denoted by $\LLL$.

\subsection{The penalty term}{\label{penlag}}

For $\varepsilon>0$ and
\[
P(\tau)=\rme^{-\tau}+\rme^{\tau/2}
\]
we add $\varepsilon P$ to the Lagrangian action to obtain
\[
\LLL_{\varepsilon}(x,\tau)=\LLL(x,\tau)+\varepsilon P(\tau).
\]

\subsection{Critical points}{\label{critpoint}}

Let $(x,\tau)$ be a critical point of $\LLL_{\varepsilon}$,
i.e. a point on which the linearization $T_{(x,\tau)}\LLL_{\varepsilon}$
vanishes.
Observe that
\[
T_{(x,\tau)}\LLL_{\varepsilon}(\xi,0)
=\rme^{-\tau}\int_0^1\langle\dot x,\dot\xi\rangle\rmd t
+\rme^{\tau}\int_0^1\langle\grad_xU,\xi\rangle\rmd t.
\]
With integration by parts we obtain a solution
of the {\bf Euler-Lagrange equation}
\[
\ddot x=\rme^{2\tau}\grad_xU,
\]
where $\ddot x=\nabla_{\!\dot x\,}\dot x$.
It follows from elliptic regularity that the solutions are smooth.

Taking
\[
T_{(x,\tau)}\LLL_{\varepsilon}(0,1)
=-\rme^{-\tau}\EE(x)
+\rme^{\tau}\UU(x)
+\varepsilon P'(\tau)
\]
we obtain the sum of the negative of the {\bf parametrized Hamiltonian action} and
($\varepsilon$ times) the derivative
\[
P'(\tau)=-\rme^{-\tau}+\frac12\rme^{\tau/2}.
\]
Because $(x,\tau)$ is a critical point the Euler-Lagrange equation
shows
that the {\bf parametrized Hamiltonian}
\[
\frac12\rme^{-\tau}|\dot x|^2-\rme^{\tau}U(x)=\varepsilon P'(\tau)
\]
is an integral of motion.
We refer to this also as the {\bf energy identity}.

Taking sum and difference of the critical value $c_{\varepsilon}=\LLL_{\varepsilon}(x,\tau)$
and $T_{(x,\tau)}\LLL_{\varepsilon}(0,1)$ we obtain
\[c_{\varepsilon}=2\rme^{\tau}\UU(x)+\varepsilon\frac32\rme^{\tau/2},\]
resp.,
\[c_{\varepsilon}=2\rme^{-\tau}\EE(x)+\varepsilon\left(2\rme^{-\tau}+\frac12\rme^{\tau/2}\right).\]


\section{Compactness\label{comp}}

\subsection{Palais-Smale property\label{ps}}

We consider a Palais-Smale sequence $(x_{\nu},\tau_{\nu})$ of $\LLL_{\varepsilon}$,
i.e. the sequence of linear operators $T_{(x_{\nu},\tau_{\nu})}\LLL_{\varepsilon}$
converges to zero and $\LLL_{\varepsilon}(x_{\nu},\tau_{\nu})$ converges to a
real number $c_{\varepsilon}$ as $\nu$ tends to infinity.

\begin{prop}
  \label{psprop}
  $(x_{\nu},\tau_{\nu})$ has a convergent subsequence.
\end{prop}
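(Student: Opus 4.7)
The plan splits into three steps: bounding $\tau_\nu$ and $\EE(x_\nu)$ via the penalty term, ruling out escape of $x_\nu(S^1)$ to infinity in $Q$ via the asymptotic conditions on $V$, and promoting the resulting pre-compactness to strong convergence in $H^1$.

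For step one, the identities from Section \ref{critpoint} apply with convergence replacing equality: combining $\LLL_\varepsilon(x_\nu,\tau_\nu)\to c_\varepsilon$ with $T_{(x_\nu,\tau_\nu)}\LLL_\varepsilon(0,1)\to 0$ and subtracting gives
\[
2\rme^{-\tau_\nu}\EE(x_\nu)+\varepsilon\bigl(2\rme^{-\tau_\nu}+\tfrac12\rme^{\tau_\nu/2}\bigr)\lra c_\varepsilon.
\]
All three terms on the left are non-negative, so the penalty piece alone forces both $\rme^{-\tau_\nu}$ and $\rme^{\tau_\nu/2}$ to remain bounded. Hence $\tau_\nu$ is bounded and, along a subsequence, $\tau_\nu\to\tau_*$; also $\EE(x_\nu)$ is bounded, which by Cauchy--Schwarz gives a uniform bound on the length of $x_\nu$.

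Step two is the crucial point. Suppose for contradiction that $\dist(o,x_\nu(t_\nu))\to\infty$ along a subsequence. The length bound together with the compactness of $\hat{Q}$ forces the entire image $x_\nu(S^1)$ to lie in $Q\setminus\hat{Q}$ and arbitrarily far from $o$ for large $\nu$. Consider the test vector field
\[
\xi_\nu(t)=\frac{\grad_{x_\nu(t)}V}{|\grad_{x_\nu(t)}V|^2}
\]
along $x_\nu$. By \eqref{ac1a}, $|\xi_\nu|\le K$ pointwise. A direct differentiation yields
\[
|\dot\xi_\nu|\le 3\cdot\frac{|\Hess V(x_\nu)|}{|\grad V(x_\nu)|}\cdot\frac{1}{|\grad V(x_\nu)|}\cdot|\dot x_\nu|,
\]
and \eqref{ac2} makes the first ratio tend to zero uniformly along $x_\nu$ while \eqref{ac1a} bounds the second by $K$. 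Hence $\|\dot\xi_\nu\|_{L^2}\to 0$, so $\|\xi_\nu\|_{H^1}$ is uniformly bounded and the Cauchy--Schwarz estimate $|\rme^{-\tau_\nu}\!\int\!\langle\dot x_\nu,\dot\xi_\nu\rangle\,\rmd t|\le\rme^{-\tau_\nu}\sqrt{2\EE(x_\nu)}\|\dot\xi_\nu\|_{L^2}\to 0$. Since $\langle\grad U,\xi_\nu\rangle\equiv -1$,
\[
T_{(x_\nu,\tau_\nu)}\LLL_\varepsilon(\xi_\nu,0)=\rme^{-\tau_\nu}\!\int_0^1\!\langle\dot x_\nu,\dot\xi_\nu\rangle\,\rmd t-\rme^{\tau_\nu}\lra -\rme^{\tau_*}\neq 0,
\]
contradicting the Palais--Smale hypothesis, which by boundedness of $\|\xi_\nu\|_{H^1}$ forces this to tend to zero. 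Therefore $x_\nu(S^1)$ stays in a fixed compact subset of $Q$.

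For step three, the Nash embedding from Section \ref{admcurves} makes $x_\nu$ a bounded sequence in the ambient Hilbert space $H^1(S^1,\R^N)$. A subsequence converges weakly in $H^1$ and uniformly via $H^1\hookrightarrow C^{0,1/2}$; the limit lies in $\MM$ because $\MM$ is a connected component of $H^1(S^1,Q)$. Upgrading weak to strong convergence is the standard argument for functionals of this type: testing with the tangential projection of $x_\nu-x_*$ (defined for large $\nu$ via a tubular neighbourhood of the compact limit curve), the Palais--Smale condition together with the uniform $C^0$-convergence and dominated convergence for $\int\langle\grad U,\cdot\rangle\,\rmd t$ yields $\int|\dot x_\nu|^2\,\rmd t\to\int|\dot x_*|^2\,\rmd t$, which combined with weak convergence gives strong $H^1$-convergence. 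The main obstacle is step two: the essential use of the refined condition \eqref{ac2} and the explicit construction $\grad V/|\grad V|^2$ as test vector replace the flow-box arguments of \cite{bpv09,bprv13} and are exactly what permits the weakened bounded-geometry assumptions \eqref{inj}--\eqref{cb}.
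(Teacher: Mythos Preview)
Your three-step argument matches the paper's proof in all essentials: the same penalty bound on $\tau_\nu$ and $\EE(x_\nu)$ from subtracting $T\LLL_\varepsilon(0,1)$, the same test vector field $\grad V/|\grad V|^2$ (the paper writes $\grad U/|\grad U|^2$, differing only in sign) combined with \eqref{ac1a} and \eqref{ac2} for the escape-to-infinity contradiction, and the standard upgrade to strong $H^1$-convergence. The only cosmetic difference is in step three, where the paper obtains $C^0$-convergence via Arzel\`a--Ascoli and then shows the sequence is Cauchy by testing with $x_\nu-x_\mu$ in a local chart, rather than your route through weak $H^1$-compactness and testing against the limit.
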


\begin{proof}
  The difference of $\LLL_{\varepsilon}(x_{\nu},\tau_{\nu})$
  and $T_{(x_{\nu},\tau_{\nu})}\LLL_{\varepsilon}(0,1)$,
  which can be estimated by
  $\varepsilon\left(2\rme^{-\tau}+\frac12\rme^{\tau/2}\right)\geq\frac32\varepsilon$
  from below,
  tends to $c_{\varepsilon}$.
  Therefore, the sequences $|\tau_{\nu}|$ and hence $\EE(x_{\nu})$ are bounded.
  We can assume that $\tau_{\nu}$ converges to $\tau_*$.

  We claim that
  \[
  \sup_{t\in S^1}\dist\big(o,x_{\nu}(t)\big)
  \]
  is bounded.
  We argue by contradiction.
  Because of the bound on the energy $\EE(x_{\nu})$
  we obtain a bound on the length of $x_{\nu}$.
  Therefore, we can assume that $x_{\nu}(S^1)$
  is contained in $Q\setminus\hat{Q}$.
  We consider the vector field
  \[
  \xi_{\nu}=\frac{\grad_{x_{\nu}}U}{|\grad_{x_{\nu}}U|^2}
  \]
  along $x_{\nu}$,
  which is well defined by \eqref{ac1a}.
  With $\Hess U=\langle\nabla\grad U,\,.\,\rangle$
  and \eqref{ac1b}
  we obtain that $\|\xi_{\nu}\|_1^2$
  is bounded by a positive constant times $1+\EE(x_{\nu})$. 
  Therefore,
  $T_{(x_{\nu},\tau_{\nu})}\LLL_{\varepsilon}(\xi_{\nu},0)$
  tends to zero
  because $(x_{\nu},\tau_{\nu})$ is a Palais-Smale sequence.
  But a direct computation using \eqref{ac1a} and \eqref{ac2} shows
  that the limit equals $\rme^{\tau_*}$.
  This is a contradiction.
  
  We claim that a subsequence of $(x_{\nu},\tau_{\nu})$
  converges in $C^0(S^1,Q)\times\R$.
  Observe that
  \[
  \dist\big(x_{\nu}(t_0),x_{\nu}(t_1)\big)
  \leq\length\big(x_{\nu}|_{t_0}^{t_1}\big)
  \leq\sqrt{|t_1-t_0|}\sqrt{2\EE(x_{\nu})}.
  \]
  The bound on $\EE(x_{\nu})$ shows that the sequence $x_{\nu}$
  is equicontinuous.
  Because the Riemannian manifold $Q$ is complete
  the theorem of Arzel\`a-Ascoli applies.
  
  We can assume that $x_{\nu}\ra x_*$ in $C^0$.
  Approximating $x_*$ by a smooth loop $y$ in $Q$
  we can further assume that the sequence $x_{\nu}$
  is contained in a chart of $\MM$ about $y$,
  w.l.o.g. $\MM=H^1(y^*TQ)$.
  Because $y$ is contractible we find an orthogonal
  trivialization of $y^*TQ$.
  For the following computations
  we further assume $Q=\R^n$ with the Euclidean metric
  by uniform equivalence
  of the Riemannian metrics
  on a compact set.
  Cf.\ also with \cite[p.~26-27]{kli78}.
  In other words
  we can assume that $\MM$ equals the Hilbert space
  $H^1(S^1,\R^n)$.
  
  Using Fourier series representations as in \cite{abb01,hoze94}
  the norm of an element $x$ of $H^1(S^1,\R^n)$
  can be estimated from above by
  $\|x\|_{\infty}^2+2\EE(x)$.
  In order to show that $x_{\nu}$
  is a Cauchy sequence it suffices to
  show that $\EE(x_{\nu}-x_{\mu})$ tends to zero
  for $\nu,\mu\ra\infty$.
  Because $(x_{\nu},\tau_{\nu})$ is a Palais-Smale sequence
  $T_{(x_{\nu},\tau_{\nu})}\LLL_{\varepsilon}(x_{\nu}-x_{\mu},0)\ra 0$.
  Hence, the integral $\int\langle\dot x_{\nu},\dot x_{\nu}-\dot x_{\mu}\rangle$
  equals $\rme^{2\tau_{\nu}}\int\langle\grad_{x_{\nu}}U,x_{\nu}-x_{\mu}\rangle$
  up to a term which tends to zero.
  Because $x_{\nu}$ converges in $C^0$ (and a symmetry argument)
  $\EE(x_{\nu}-x_{\mu})$ tends to zero as well.
  Hence, $x_{\nu}$ is a Cauchy sequence in $\MM$,
  which converges to $x_*$.
\end{proof}

\subsection{Depenalization\label{dp}}

As we will show in Section \ref{mountain}
there exist positive constants $K_1<K_2$
and a sequence $\varepsilon\searrow 0$
such that $\LLL_{\varepsilon}$ carries a critical point
$(x,\tau)=(x_{\varepsilon},\tau_{\varepsilon})$
whose critical value $c_{\varepsilon}$
is contained in the interval $[K_1,K_2]$.

\begin{lem}
  The sequence $\tau_{\varepsilon}$ is bounded above.
\end{lem}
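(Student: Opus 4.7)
The plan is to argue by contradiction, supposing $\tau_\varepsilon\to+\infty$ along a subsequence. My principal tool is the critical-value identity from Section~\ref{critpoint},
\[
c_\varepsilon=2\rme^{\tau_\varepsilon}\UU(x_\varepsilon)+\tfrac{3}{2}\varepsilon\rme^{\tau_\varepsilon/2},
\]
together with the pointwise energy identity $\tfrac12\rme^{-\tau}|\dot x|^2=\rme^{\tau}U(x)+\varepsilon P'(\tau)$ and the uniform bound $c_\varepsilon\leq K_2$.

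First I would extract a lower bound on $\UU(x_\varepsilon)$. Non-negativity of $|\dot x_\varepsilon|^2$ gives the pointwise inequality
\[
U(x_\varepsilon(t))\geq -\varepsilon\rme^{-\tau_\varepsilon}P'(\tau_\varepsilon)=\varepsilon\rme^{-2\tau_\varepsilon}-\tfrac12\varepsilon\rme^{-\tau_\varepsilon/2}\geq-\tfrac12\varepsilon\rme^{-\tau_\varepsilon/2},
\]
and integrating yields $\UU(x_\varepsilon)\geq-\tfrac12\varepsilon\rme^{-\tau_\varepsilon/2}$. Substituting into the critical-value identity produces the penalty bound
\[
\tfrac12\varepsilon\rme^{\tau_\varepsilon/2}\leq c_\varepsilon\leq K_2,
\]
while the same identity gives $\UU(x_\varepsilon)\leq K_2/(2\rme^{\tau_\varepsilon})\to 0$. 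Along the subsequence we therefore have both $\UU(x_\varepsilon)\to 0$ and $U(x_\varepsilon(t))\geq-\tfrac12\varepsilon\rme^{-\tau_\varepsilon/2}\to 0$ uniformly in $t$, so $x_\varepsilon$ concentrates in a shrinking neighborhood of the regular hypersurface $\partial N=\{V=0\}$.

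To close the contradiction I would exploit the Euler-Lagrange equation $\ddot x_\varepsilon=\rme^{2\tau_\varepsilon}\grad U(x_\varepsilon)$. Periodicity gives $\int_0^1\grad U(x_\varepsilon(t))\,dt=0$; meanwhile \eqref{reg} ensures $|\grad U|$ is bounded below by a positive constant in some neighborhood of $\partial N$, while \eqref{ac1a}--\eqref{ac2} provide the corresponding control at infinity. Vanishing of the averaged gradient is incompatible with $x_\varepsilon$ lying in an arbitrarily narrow neighborhood of $\partial N$; consequently $x_\varepsilon$ must enter a region $\{U\geq\delta\}$ with uniform $\delta>0$ on a set of $t$'s of definite measure, producing a uniform positive lower bound on $\UU(x_\varepsilon)$ that contradicts $\UU(x_\varepsilon)\to 0$.

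The main obstacle is this last quantitative step. The penalty bound alone gives only $\tau_\varepsilon\leq 2\log(2K_2/\varepsilon)$, which diverges as $\varepsilon\searrow 0$, so a genuinely uniform bound on $\tau_\varepsilon$ requires extracting information from the dynamics. Making the qualitative picture ``slow, long-period orbits hugging $\partial N$ with zero average gradient must leave a neighborhood of $\partial N$'' into an effective inequality will presumably rely on a comparison argument of the type indicated in the introduction as a replacement for the asymptotic flow-box technique of \cite{bpv09,bprv13}, combining \eqref{ac1a}--\eqref{ac2} at infinity with the regularity of $\partial N$ from \eqref{reg} near the zero set.
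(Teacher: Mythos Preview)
Your argument has a genuine gap at the dynamical step, and the paper proceeds quite differently.

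There are two problems with the closing move. First, the identity $\int_0^1\grad U(x_\varepsilon)\,\rmd t=0$ does not follow from periodicity on a general Riemannian manifold: $\ddot x_\varepsilon=\nabla_{\dot x_\varepsilon}\dot x_\varepsilon$ is the covariant acceleration, and the integrand takes values in varying tangent spaces. Passing to normal coordinates or a Nash embedding introduces Christoffel or second-fundamental-form terms proportional to $|\dot x_\varepsilon|^2$, which you cannot control --- from $c_\varepsilon=2\rme^{-\tau_\varepsilon}\EE(x_\varepsilon)+\varepsilon(\cdots)$ the energy is only bounded by $\tfrac12 K_2\rme^{\tau_\varepsilon}$ and may blow up. Second, even in flat space the heuristic ``zero average gradient forces the loop out of a thin tube around $\partial N$'' fails without a length bound: a long loop running along $\partial N$ can perfectly well average the (rotating) normal field to zero while remaining entirely in $\{|U|<\eta\}$. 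Your penalty estimate $\tau_\varepsilon\le 2\log(2K_2/\varepsilon)$ allows exactly this long--slow regime, so the proposed mechanism cannot be made effective.

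The paper avoids all of this by testing criticality against the direction $(\xi,-1)$ with $\xi=\delta\,\grad_xU/(1+|\grad_xU|^2)$. Writing out $c_\varepsilon=\LLL_\varepsilon(x,\tau)+T_{(x,\tau)}\LLL_\varepsilon(\xi,-1)$ and using \eqref{ac1a}, \eqref{ac1b} to choose $\delta$ so that $|\dot\xi|\le\tfrac12|\dot x|$ yields
\[
c_\varepsilon>\rme^{\tau_\varepsilon}\int_0^1\Big(\tfrac12\rme^{-2\tau_\varepsilon}|\dot x_\varepsilon|^2+\delta\,\frac{|\grad_{x_\varepsilon}U|^2}{1+|\grad_{x_\varepsilon}U|^2}\Big)\rmd t.
\]
The integrand itself carries a uniform positive lower bound: wherever $|\grad_{x_\varepsilon}U|$ is bounded below (outside $\hat Q$ via \eqref{ac1a}, or near $\partial N\cap\hat Q$ via \eqref{reg}) the second summand works; otherwise the multiplied energy identity forces $\tfrac12\rme^{-2\tau_\varepsilon}|\dot x_\varepsilon|^2$ to be bounded below. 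This gives $K_2\ge c_\varepsilon>\rme^{\tau_\varepsilon}c_0$ with $c_0>0$ independent of $\varepsilon$, hence the uniform upper bound directly. No averaging, no curve-length control, and no comparison argument enters here --- that machinery is reserved for the \emph{lower} bound in Lemma~\ref{boudbelow}.
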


\begin{proof}
  The sum
  \[
  c_{\varepsilon}=\LLL_{\varepsilon}(x,\tau)+T_{(x,\tau)}\LLL_{\varepsilon}(\xi,-1)
  \]
  equals the following expression
  \[
  \int_0^1
  \Big(
  \rme^{-\tau}
  \big(
  |\dot x|^2+\langle\dot x,\dot\xi\rangle
  \big)
  +
  \rme^{\tau}\langle\grad_xU,\xi\rangle
  \Big)
  \rmd t
  +
  \varepsilon\left(2\rme^{-\tau}+\frac12\rme^{\tau/2}\right).
  \]
  Removing the $\varepsilon$-term and plugging in
  \[
  \xi=\delta\frac{\grad_xU}{1+|\grad_xU|^2}
  \]
  for some $\delta>0$ yields
  \[
  c_{\varepsilon}>
  \int_0^1
  \Big(
  \rme^{-\tau}
  \big(
  |\dot x|^2+\langle\dot x,\dot\xi\rangle
  \big)
  +
  \rme^{\tau}\delta\frac{|\grad_xU|^2}{1+|\grad_xU|^2}
  \Big)
  \rmd t.
  \]
  Due to \eqref{ac1a} and \eqref{ac1b} $|\dot\xi|$
  is bounded by an $\varepsilon$-independent positive
  constant times $\delta|\dot x|$.
  We choose $\delta$ such that $|\dot\xi|\leq\frac12|\dot x|$.
  This implies $|\dot x|^2+\langle\dot x,\dot\xi\rangle\geq\frac12|\dot x|^2$
  and therefore,
  \[
  c_{\varepsilon}>\rme^{\tau}
  \int_0^1
  \Big(
  \rme^{-2\tau}\frac12|\dot x|^2
  +
  \delta\frac{|\grad_xU|^2}{1+|\grad_xU|^2}
  \Big)
  \rmd t.
  \]
  It suffices to bound the integrand $I_{\varepsilon}$
  from below.
  
  If the curves $x_{\varepsilon}$ stay outside
  the compact set $\hat{Q}$ a lower bound
  is given by $\delta (1+K^2)^{-1}$
  using \eqref{ac1a}.
  In the alternative case we find $t_{\varepsilon}\in S^1$
  such that $x_{\varepsilon}(t_{\varepsilon})\in\hat{Q}$.
  We can assume that $\tau_{\varepsilon}\geq0$
  because otherwise there is nothing to show.
  This implies that the multiplied energy identity
  \[
  \frac12\rme^{-2\tau_{\varepsilon}}|\dot x_{\varepsilon}|^2
  -U(x_{\varepsilon})
  =\varepsilon\left(\frac12\rme^{-\tau_{\varepsilon}/2}-\rme^{-2\tau_{\varepsilon}}\right)
  \lra 0
  \]
  tends to zero
  independently of $t$
  because it takes values in the interval
  $(-\frac{\varepsilon}{2},\frac{\varepsilon}{2})$.
  Consider the set $T_{\varepsilon}$ of $t\in S^1$
  for which the first term of the multiplied energy identity
  is bounded away from zero by a small constant.
  The set $T_{\varepsilon}$ is measurable.
  The integrand $I_{\varepsilon}$ restricted to $T_{\varepsilon}$
  is bounded below as desired.
  On the complement $S^1\setminus T_{\varepsilon}$
  we can assume that
  $U(x_{\varepsilon})$
  is uniformly close to zero.
  In other words, $x_{\varepsilon}\big(S^1\setminus T_{\varepsilon}\big)$
  is contained in a small neighbourhood of $\partial N\cap\hat{Q}$
  on which $|\grad U|$ is uniformly positive.
  This implies a lower bound of $I_{\varepsilon}$
  on the complement as well.
\end{proof}

\begin{lem}
  \label{boudbelow}
  The sequence $\tau_{\varepsilon}$ is bounded below.
\end{lem}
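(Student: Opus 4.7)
The plan is to argue by contradiction: assume $\tau_\varepsilon\to-\infty$ along a subsequence. From $c_\varepsilon = 2\rme^{-\tau}\EE(x_\varepsilon) + 2\varepsilon\rme^{-\tau} + \frac{\varepsilon}{2}\rme^{\tau/2}\leq K_2$ and $\rme^{-\tau}\to\infty$, we get $\EE(x_\varepsilon) + \varepsilon \leq K_2\rme^\tau/2 \to 0$, so the length of $x_\varepsilon$ is at most $\sqrt{2\EE(x_\varepsilon)}\to 0$ and $\varepsilon\rme^{-\tau}\leq K_2/2$. From $c_\varepsilon = 2\rme^\tau\UU(x_\varepsilon) + \frac{3\varepsilon}{2}\rme^{\tau/2}\geq K_1$ and $\frac{3\varepsilon}{2}\rme^{\tau/2}\to 0$ we deduce $\rme^\tau\UU(x_\varepsilon)\to c_*/2$ with $c_*\in[K_1,K_2]$, hence $\UU(x_\varepsilon)\to\infty$. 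The curves therefore leave every compact set, and for $\varepsilon$ small, the image of $x_\varepsilon$ lies entirely in $Q\setminus\hat Q$ where \eqref{ac1a}, \eqref{ac1b} and \eqref{ac2} apply.

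The comparison step is carried out in normal coordinates at $q_\varepsilon:=x_\varepsilon(0)$. Thanks to \eqref{inj} and \eqref{cb}, Rauch/Jacobi-field comparison yields, on the ball of radius $\delta_\varepsilon:=\sqrt{2\EE(x_\varepsilon)}\to 0$ (which contains the image of $y(t):=\exp^{-1}_{q_\varepsilon}(x_\varepsilon(t))$), the quantitative estimates $|g_{ij}-\delta_{ij}|\leq C\delta_\varepsilon^2$ and $|\Gamma^i_{jk}|\leq C\delta_\varepsilon$. This is the comparison argument replacing the asymptotic flow-box of \cite{bpv09,bprv13}, and is where only bounded geometry (not flatness) is needed. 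Since $\dot y^i$ is $1$-periodic with mean zero, each component has a zero, so the elementary one-variable inequality
\[
\sup_{t}|\dot y(t)|\leq \tfrac{\sqrt{n}}{2}\int_0^1|\ddot y|\,dt
\]
holds.

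The heart of the argument is a bootstrap that upgrades the asymptotic bound $|\grad U|\geq 1/K$ to the sharper $|\grad U|\gtrsim \rme^{-3\tau/2}$. The Euler--Lagrange equation $\ddot y^i + \Gamma^i_{jk}\dot y^j\dot y^k = \rme^{2\tau}g^{ij}\partial_j U$ together with the Christoffel estimate gives $\int|\ddot y|\,dt\leq 2C\delta_\varepsilon\EE(x_\varepsilon) + 2\rme^{2\tau}\sup_{x_\varepsilon}|\grad U|$, while the energy identity together with $U_{\max}\geq\UU(x_\varepsilon)\sim c_*\rme^{-\tau}/2$ yields $\sup|\dot x_\varepsilon|\gtrsim \rme^{\tau/2}$ (with a positive constant coming from the leading-order terms). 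Combining via the sup--integral bound and noting $\delta_\varepsilon\EE(x_\varepsilon)=O(\rme^{3\tau/2})=o(\rme^{\tau/2})$ as $\tau\to-\infty$, we extract $\sup_{x_\varepsilon}|\grad U|\gtrsim \rme^{-3\tau/2}$; by \eqref{ac1b}, the oscillation of $|\grad U|$ over the short curve is a factor at most $\rme^{K\delta_\varepsilon}\to 1$, so the bound $|\grad U|\gtrsim \rme^{-3\tau/2}$ holds pointwise on $x_\varepsilon$.

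To close, plug the test variation $\xi:=\grad U/|\grad U|^2$ (well-defined by the previous bound, as in the Palais--Smale proof) into the criticality condition $T_{(x_\varepsilon,\tau_\varepsilon)}\LLL_\varepsilon(\xi,0)=0$, which reads $\int_0^1\langle\dot x_\varepsilon,\dot\xi\rangle\,dt=-\rme^{2\tau}$. By \eqref{ac1b} and the bootstrap, $|\dot\xi|\leq 3K|\dot x|/|\grad U|\lesssim \rme^{3\tau/2}|\dot x|$, so Cauchy--Schwarz gives
\[
\rme^{2\tau}\leq\int_0^1|\dot x_\varepsilon|\,|\dot\xi|\,dt\lesssim \rme^{3\tau/2}\cdot 2\EE(x_\varepsilon)\leq C'\rme^{5\tau/2},
\]
whence $\rme^{-\tau/2}$ is bounded, contradicting $\tau_\varepsilon\to-\infty$. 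The main obstacle is the bootstrap in the third paragraph: extracting the lower bound $|\grad U|\gtrsim \rme^{-3\tau/2}$ cleanly, by combining the Poincar\'e-type sup--integral inequality with the Euler--Lagrange equation and the energy identity, while carefully absorbing the Christoffel corrections supplied by the bounded-geometry comparison.
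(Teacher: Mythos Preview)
Your overall strategy is sound and genuinely different from the paper's (the paper uses a second-derivative test at the maximum of $t\mapsto U(x_\varepsilon(t))$, then \eqref{ac2}, then a Pythagorean argument in normal coordinates; you instead combine a Poincar\'e-type sup--integral inequality with the test vector $\xi=\grad U/|\grad U|^2$). However, there is a real gap in your bootstrap. You claim $\sup|\dot x_\varepsilon|\gtrsim \rme^{\tau/2}$ from the energy identity and $U_{\max}\ge\UU(x_\varepsilon)\sim c_*\rme^{-\tau}/2$. Writing out the energy identity,
\[
|\dot x_\varepsilon(t)|^2 = 2\rme^{2\tau}U\big(x_\varepsilon(t)\big) - 2\varepsilon + \varepsilon\,\rme^{3\tau/2},
\]
and using $2\rme^{2\tau}U_{\max}\ge 2\rme^{2\tau}\,\UU(x_\varepsilon)=c_\varepsilon\rme^\tau - \tfrac{3\varepsilon}{2}\rme^{3\tau/2}$, one only obtains $\sup|\dot x_\varepsilon|^2\ge c_\varepsilon\rme^\tau - 2\varepsilon - \tfrac{\varepsilon}{2}\rme^{3\tau/2}=2\EE(x_\varepsilon)$. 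Your bound $\varepsilon\,\rme^{-\tau}\le K_2/2$ does \emph{not} force $\varepsilon\,\rme^{-\tau}\to 0$; nothing in the hypotheses excludes, say, $2\varepsilon\sim c_\varepsilon\rme^\tau$ along a subsequence, in which case the $-2\varepsilon$ term is of the same order as $c_\varepsilon\rme^\tau$ and the constant in your claimed lower bound $\sup|\dot x_\varepsilon|\gtrsim\rme^{\tau/2}$ degenerates to zero.

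The gap is easily repaired: replace the unjustified $\sup|\dot x_\varepsilon|\gtrsim\rme^{\tau/2}$ by the trivial $\sup|\dot x_\varepsilon|\ge\sqrt{2\EE(x_\varepsilon)}>0$ (strict positivity because $x_\varepsilon$ is nonconstant, since $\grad U\neq 0$ on $Q\setminus\hat Q$ by \eqref{ac1a}). Your sup--integral inequality together with the Euler--Lagrange equation then gives $\sqrt{2\EE}\le C\big(\sqrt{2\EE}\cdot\EE + \rme^{2\tau}\sup_{x_\varepsilon}|\grad U|\big)$; since $\EE\to 0$ the first term on the right is absorbed into the left, yielding $\sup_{x_\varepsilon}|\grad U|\gtrsim\sqrt{\EE}\,\rme^{-2\tau}$, and by Gr\"onwall via \eqref{ac1b} the same bound holds pointwise on $x_\varepsilon(S^1)$. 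Your closing step then reads
\[
\rme^{2\tau}=\Big|\int_0^1\langle\dot x_\varepsilon,\dot\xi\rangle\,\rmd t\Big|\lesssim \frac{\rme^{2\tau}}{\sqrt{\EE}}\cdot 2\EE = 2\sqrt{\EE}\,\rme^{2\tau},
\]
forcing $\sqrt{\EE}\gtrsim 1$, which contradicts $\EE\to 0$. With this repair your argument is a legitimate alternative to the paper's and, notably, does not invoke \eqref{ac2} for this particular lemma.
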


\begin{proof}
  Arguing by contradiction
  we assume that $\tau_{\varepsilon}\ra-\infty$
  as $\varepsilon$ tends to zero.
  Because $K_2>c_{\varepsilon}>2\rme^{-\tau_{\varepsilon}}\EE(x_{\varepsilon})$,
  see Section \ref{critpoint},
  we infer
  $
  \EE(x_{\varepsilon})\ra 0
  $.
  Therefore, \[\length(x_{\varepsilon})\ra 0.\]
  Again with Section \ref{critpoint} we get
  $
  K_1
  <c_{\varepsilon}
  =2\rme^{\tau_{\varepsilon}}\UU(x_{\varepsilon})
  +\varepsilon\frac32\rme^{\tau_{\varepsilon}/2}.
  $
  Hence,
  the sequence $\UU(x_{\varepsilon})$ is unbounded.
  This implies
  \[
  \inf_{t\in S^1}\dist\big(o,x_{\varepsilon}(t)\big)\lra\infty.
  \]
  Consequently,
  we can assume that $x_{\varepsilon}(S^1)$
  is contained in
  the intersection of $Q\setminus\hat{Q}$
  and the geodesic ball $B(q)$ of radius $\varrho<\frac12\inj g$
  about a point $q$ on the curve $x_{\varepsilon}$.
  Moreover, with \eqref{ac1a} the solution $x_{\varepsilon}$
  of the Euler-Lagrange equation
  is not constant.
  The following arguments will lead to a contradiction.
  
  With help of the Euler-Lagrange equation for $x_{\varepsilon}$
  we observe
  \[
  \frac{\rmd^2}{\rmd t^2}U\big(x_{\varepsilon}(t)\big)
  =\Big(\Hess_{x_{\varepsilon}(t)}U\Big)\big(\dot x_{\varepsilon}(t),\dot x_{\varepsilon}(t)\big)
  +\rme^{2\tau_{\varepsilon}}|\grad_{x_{\varepsilon}(t)}U|^2.
  \]
  For the maximum $t_{\varepsilon}$
  of the function $t\mapsto U\big(x_{\varepsilon}(t)\big)$
  on the circle this yields
  \[
  \rme^{2\tau_{\varepsilon}}|\grad_{x_{\varepsilon}(t_{\varepsilon})}U|^2
  \leq\big|\Hess_{x_{\varepsilon}(t_{\varepsilon})}U\big|\;|\dot x_{\varepsilon}(t_{\varepsilon})|^2.
  \]
  Invoking \eqref{ac1b} the estimate implies
  \[
  \rme^{2\tau_{\varepsilon}}|\grad_{x_{\varepsilon}(t_{\varepsilon})}U|
  \leq K|\dot x_{\varepsilon}(t_{\varepsilon})|^2.
  \]
  Moreover, the multiplied energy identity
  \[
  \frac12|\dot x_{\varepsilon}|^2
  -\rme^{2\tau_{\varepsilon}}U(x_{\varepsilon})
  =\varepsilon\left(\frac12\rme^{3\tau_{\varepsilon}/2}-1\right)
  \lra 0
  \]
  plugged in gives
  \[
  \rme^{2\tau_{\varepsilon}}|\grad_{x_{\varepsilon}(t_{\varepsilon})}U|
  \leq K\Big(o(1)+\rme^{2\tau_{\varepsilon}}U\big(x_{\varepsilon}(t_{\varepsilon})\big)\Big).
  \]
  In order to show that
  the right hand side tends to zero
  we use the following {\bf mean value argument}.
  By Section \ref{critpoint} there exists $t_0$ such that
  \[
  \rme^{2\tau_{\varepsilon}}U\big(x_{\varepsilon}(t_0)\big)
  <\frac12K_2\rme^{\tau_{\varepsilon}}.
  \]
  We denote by $c$ the unit speed geodesic
  from $c(0)=x_{\varepsilon}(t_0)$
  to $c(s_0)=x_{\varepsilon}(t_{\varepsilon})$
  inside the geodesic ball $B\big(x_{\varepsilon}(t_{\varepsilon})\big)$.
  With the fundamental theorem of calculus
  we obtain
  $U\big(x_{\varepsilon}(t_{\varepsilon})\big)\leq U\big(x_{\varepsilon}(t_0)\big)+\int_0^{s_0}|\grad_{c(s)}U|\rmd s$.
  An application of Gr\"onwall's lemma
  to the function $s\mapsto |\grad_{c(s_0-s)}U|$ gives
  $|\grad_{c(s)}U|\leq e^{Ks_0}|\grad_{x_{\varepsilon}(t_{\varepsilon})}U|$
  for all $s\in [0,s_0]$
  using \eqref{ac1a} and \eqref{ac1b}.
  Therefore,
  \[
  \rme^{2\tau_{\varepsilon}}|\grad_{x_{\varepsilon}(t_{\varepsilon})}U|
  < K
  \Big(
  o(1)
  +\frac12K_2\rme^{\tau_{\varepsilon}}
  +s_0\rme^{2\tau_{\varepsilon}+Ks_0}|\grad_{x_{\varepsilon}(t_{\varepsilon})}U|\Big).
  \]
  Because the length of $x_{\varepsilon}$
  (and hence $s_0$)
  tends to zero
  we can choose $\varepsilon$ such that  
  \[
  \rme^{2\tau_{\varepsilon}}|\grad_{x_{\varepsilon}(t_{\varepsilon})}U|
  <K
  \Big(
  o(1)
  +K_2\rme^{\tau_{\varepsilon}}\Big).
  \]
  Invoking Gr\"onwall's lemma
  along geodesics connecting $x_{\varepsilon}(t_{\varepsilon})$
  with the boundary of $B\big(x_{\varepsilon}(t_{\varepsilon})\big)$
  we obtain as above  
  \[
  \rme^{2\tau_{\varepsilon}}|\grad U|
  \lra 0
  \]
  uniformly on $B\big(x_{\varepsilon}(t_{\varepsilon})\big)$.

  \begin{rem}
    \label{speedesti}
    Consider the geodesic ball $B^{\varepsilon}$
    of radius $\frac12\varrho$ with center
    $x_{\varepsilon}(t_0)$.
    If $\varepsilon>0$ is sufficiently small
    $B^{\varepsilon}$ is contained in
    $B\big(x_{\varepsilon}(t_{\varepsilon})\big)$
    and contains $x_{\varepsilon}(S^1)$.
    Integrating along geodesics
    which start at $x_{\varepsilon}(t_0)$
    shows
    \[
    U\big(x_{\varepsilon}(t)\big)
    \leq U\big(x_{\varepsilon}(t_0)\big)
    +\frac14\inj g\;\,\sup_{B^{\varepsilon}}|\grad U|
    \]
    for all $t\in S^1$.
    Hence,
    \[
    \rme^{2\tau_{\varepsilon}}U(x_{\varepsilon})
    \leq\frac12K_2\rme^{\tau_{\varepsilon}}
    +o(1)
    \]
    uniformly on $S^1$.
    By the multiplied energy identity
    $
    \frac12|\dot x_{\varepsilon}|^2\leq o(1)
    $
    tends uniformly to zero.
    Therefore,
    we can assume that $|\dot x_{\varepsilon}|^2<1$
    on $S^1$.
  \end{rem}

  We continue the proof of the lemma.
  With \eqref{ac2} we obtain the stronger estimate
  \[
  \rme^{2\tau_{\varepsilon}}|\grad_{x_{\varepsilon}(t_{\varepsilon})}U|
  \leq o(1)\,|\dot x_{\varepsilon}(t_{\varepsilon})|^2
  \]
  as $\varepsilon$ tends to zero.
  The aim is to find a similar estimate
  for all $t\in S^1$
  with a variation of the above mean value argument.
  We consider a unit speed geodesic $c$
  inside $B\big(x_{\varepsilon}(t)\big)$
  connecting $c(0)=x_{\varepsilon}(t)$
  with $c(s_0)=x_{\varepsilon}(t_{\varepsilon})$.
  With the fundamental theorem of calculus
  and the Gr\"onwall's lemma applied
  to the function $s\mapsto |\grad_{c(s)}U|$
  we obtain
  \[
  U\big(x_{\varepsilon}(t_{\varepsilon})\big)-U\big(x_{\varepsilon}(t)\big)
  \leq
  s_0\rme^{Ks_0}|\grad_{x_{\varepsilon}(t)}U|
  \]
  using \eqref{ac1a} and \eqref{ac1b}.
  Combining this with the difference
  of the multiplied energy identities
  gives
  \[
  |\dot x_{\varepsilon}(t_{\varepsilon})|^2-|\dot x_{\varepsilon}(t)|^2
  \leq
  2s_0\rme^{2\tau_{\varepsilon}+Ks_0}|\grad_{x_{\varepsilon}(t)}U|.
  \]
  We will use this to estimate
  $\rme^{2\tau_{\varepsilon}}|\grad_{x_{\varepsilon}(t)}U|$
  from above.
  The Gr\"onwall's lemma gives a bound
  by $\rme^{2\tau_{\varepsilon}+Ks_0}|\grad_{x_{\varepsilon}(t_{\varepsilon})}U|$,
  which with the initial estimate is bounded by
  $o(1)\,\rme^{Ks_0}|\dot x_{\varepsilon}(t_{\varepsilon})|^2$.
  Therefore,
  \[
  \rme^{2\tau_{\varepsilon}}|\grad_{x_{\varepsilon}(t)}U|\leq
  o(1)
  \Big(
  |\dot x_{\varepsilon}(t)|^2+
  2s_0\rme^{2\tau_{\varepsilon}+Ks_0}|\grad_{x_{\varepsilon}(t)}U|
  \Big).
  \]
  Choosing $\varepsilon$ (and hence $s_0$ and $|\dot x_{\varepsilon}(t)|^2$,
  see Remark \ref{speedesti})
  sufficiently small we obtain the linear estimate
  \[
  \rme^{2\tau_{\varepsilon}}|\grad_{x_{\varepsilon}(t)}U|
  \leq o(1)\,|\dot x_{\varepsilon}(t)|
  \]
  uniformly for all $t\in S^1$.
  
  The desired contradiction will be achieved
  with the following comparison argument:
  Choose $\varrho$ smaller or equal than
  the harmonic injectivity radius.
  We assume that $Q=\R^n$
  using harmonic coordinates
  on $B(q)$ for a point $q$ on the curve $x_{\varepsilon}$.
  By \cite[Theorem 1.2]{hebey99} and \eqref{cb}
  the metric $\langle\,.\,,\,.\,\rangle$
  is uniformly equivalent to the Euclidean
  metric $\langle\,.\,,\,.\,\rangle_0$
  and the Christoffel symbols $\Gamma$
  are uniformly bounded.
  In particular, with Remark \ref{speedesti}
  \[
  \Big|\Gamma_{x_{\varepsilon}(t)}\big(\dot x_{\varepsilon}(t),\dot x_{\varepsilon}(t)\big)\Big|_0
  \leq o(1)\,|\dot x_{\varepsilon}(t)|_0
  \]
  uniformly in $t\in S^1$.
  Consequently,
  we have using the Euler-Lagrange equation
  \[
  \big|\dot x_{\varepsilon}(t_1)-\dot x_{\varepsilon}(t_2)\big|_0
  \leq\int_{t_1}^{t_2}\Big|\frac{\rmd}{\rmd t}\dot x_{\varepsilon}(t)\Big|_0\rmd t
  \leq o(1)\int_0^1|\dot x_{\varepsilon}(t)|_0\rmd t,
  \]
  for all $t_1,t_2\in[0,1]$.
  Let $t_1$ be the maximum
  of $t\mapsto |\dot x_{\varepsilon}(t)|_0$.
  Let $t_2$ be a point
  such that $\dot x_{\varepsilon}(t_2)$ vanishes
  or is perpendicular to $\dot x_{\varepsilon}(t_1)$
  w.r.t.\ the Euclidean metric.
  With the Pythagorean theorem
  \[
  |\dot x_{\varepsilon}(t_1)|_0\leq
  \big|\dot x_{\varepsilon}(t_1)-\dot x_{\varepsilon}(t_2)\big|_0\leq
  o(1)\,|\dot x_{\varepsilon}(t_1)|_0.
  \]
  This is a contradiction
  because the curve $x_{\varepsilon}$
  is not constant.
\end{proof}

The above lemmata ensure bounds on the
sequence $\tau_{\varepsilon}$ of Lagrangian multipliers.
A repetition of the arguments from
Proposition \ref{psprop} proves:

\begin{prop}
  \label{confore0}
  The sequence of critical points $(x_{\varepsilon},\tau_{\varepsilon})$
  has a convergent subsequence as $\varepsilon$ tends to zero.
\end{prop}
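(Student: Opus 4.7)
The plan is to re-run the four steps of the proof of Proposition~\ref{psprop} almost verbatim, with one structural substitution: where the earlier proof used ``$T_{(x_\nu,\tau_\nu)}\LLL_\varepsilon\to 0$ as operators'' (Palais--Smale), we now use the stronger fact that $(x_\varepsilon,\tau_\varepsilon)$ is an exact critical point of $\LLL_\varepsilon$. Crucially, since the penalty $\varepsilon P(\tau)$ depends only on $\tau$, we have the identity $T_{(x_\varepsilon,\tau_\varepsilon)}\LLL_\varepsilon(\xi,0)=T_{(x_\varepsilon,\tau_\varepsilon)}\LLL(\xi,0)$ for every tangent vector $\xi\in T_{x_\varepsilon}\MM$, so the unpenalized Lagrangian action has vanishing $x$-derivative at each $(x_\varepsilon,\tau_\varepsilon)$.

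First, I would extract an $H^1$-bound. The two lemmas of this subsection give $\tau_\varepsilon$ bounded, so after passing to a subsequence $\tau_\varepsilon\to\tau_*\in\R$. The identity $c_\varepsilon=2\rme^{-\tau_\varepsilon}\EE(x_\varepsilon)+\varepsilon(2\rme^{-\tau_\varepsilon}+\tfrac12\rme^{\tau_\varepsilon/2})$ from Section~\ref{critpoint}, together with $c_\varepsilon\in[K_1,K_2]$, then bounds $\EE(x_\varepsilon)$.

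Next, I would show that $\sup_{t\in S^1}\dist(o,x_\varepsilon(t))$ stays bounded by arguing exactly as in Proposition~\ref{psprop}: if not, the bound on energy (hence length) forces $x_\varepsilon(S^1)\subset Q\setminus\hat Q$ eventually, so the vector field $\xi_\varepsilon=\grad_{x_\varepsilon}U/|\grad_{x_\varepsilon}U|^2$ is defined and has $\|\xi_\varepsilon\|_1^2$ controlled by $1+\EE(x_\varepsilon)$ via \eqref{ac1a}--\eqref{ac1b}. Then $0=T_{(x_\varepsilon,\tau_\varepsilon)}\LLL(\xi_\varepsilon,0)$, but the direct computation from Proposition~\ref{psprop}, using \eqref{ac1a} and \eqref{ac2} to make the kinetic term vanish in the limit, forces the limit to equal $\rme^{\tau_*}\neq 0$, a contradiction. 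With this uniform bound in hand, the same Cauchy--Schwarz estimate $\dist(x_\varepsilon(t_0),x_\varepsilon(t_1))\leq\sqrt{|t_1-t_0|}\sqrt{2\EE(x_\varepsilon)}$ gives equicontinuity, and Arzel\`a--Ascoli (applicable since $(Q,g)$ is complete) yields a $C^0$-convergent subsequence $x_\varepsilon\to x_*$.

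Finally, I would upgrade $C^0$-convergence to $H^1$-convergence by the same chart argument: approximate $x_*$ by a smooth loop $y$, trivialize $y^*TQ$ orthogonally (possible by contractibility), and work in $H^1(S^1,\R^n)$ with the Euclidean metric, using uniform equivalence of Riemannian metrics on a compact set. Apply the critical point equation with test vector $\xi=x_\varepsilon-x_{\varepsilon'}$: the identity $T_{(x_\varepsilon,\tau_\varepsilon)}\LLL(x_\varepsilon-x_{\varepsilon'},0)=0$ yields
\[
\int_0^1\langle\dot x_\varepsilon,\dot x_\varepsilon-\dot x_{\varepsilon'}\rangle\,\rmd t
=\rme^{2\tau_\varepsilon}\int_0^1\langle\grad_{x_\varepsilon}U,x_\varepsilon-x_{\varepsilon'}\rangle\,\rmd t,
\]
and the right hand side tends to zero by $C^0$-convergence. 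A symmetric estimate swapping the roles of $\varepsilon$ and $\varepsilon'$ gives $\EE(x_\varepsilon-x_{\varepsilon'})\to 0$, and together with the $\|\cdot\|_\infty^2+2\EE$ upper bound on the $H^1$-norm from~\cite{abb01,hoze94}, this identifies $x_\varepsilon$ as a Cauchy sequence in $\MM$ converging to $x_*$.

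The main subtlety to watch is the passage between $\LLL_\varepsilon$ and $\LLL$ in the $x$-direction: the argument never requires $\varepsilon\to 0$ to kill a derivative of the penalty in this direction, precisely because $\varepsilon P(\tau)$ depends only on $\tau$. The potentially delicate point is the limit computation in step two, where \eqref{ac2} is needed to drive the kinetic cross-term to zero; this is exactly the step where the asymptotic hypotheses are essential and where the argument would fail with only \eqref{ac1a}--\eqref{ac1b}.
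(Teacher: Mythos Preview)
Your proposal is correct and follows precisely the approach the paper intends: the paper's own proof consists solely of the remark that the two preceding lemmata bound $\tau_\varepsilon$, after which ``a repetition of the arguments from Proposition~\ref{psprop}'' yields the result. You have faithfully spelled out that repetition, including the key observation that the penalty depends only on $\tau$ so that the $x$-directional derivatives of $\LLL_\varepsilon$ and $\LLL$ coincide.
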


In particular the limit curve is a critical point of $\LLL$
with vanishing parameterized Hamiltonian energy.


\section{Mountain pass\label{mountain}}

The aim of this section is to prove the following existence statement,
which in view of Section \ref{dp} and Proposition \ref{confore0} proves Theorem \ref{mthm}:

\begin{prop}
  \label{existence}
  There exist positive constants $K_1$ and $K_2$
  with $K_1<K_2$
  such that for all $\varepsilon\in(0,K_1)$
  there exist $\varepsilon_0\in(0,\varepsilon)$
  and a critical point $(x,\tau)$ of $\LLL_{\varepsilon_0}$
  such that
  $
  K_1\leq\LLL_{\varepsilon_0}(x,\tau)\leq K_2.
  $
\end{prop}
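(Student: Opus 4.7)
My approach is a finite-dimensional linking minimax based on the relative class in $H_*(N,\partial N)$, followed by Struwe's monotonicity trick to extract a genuine critical point out of the family $\LLL_\varepsilon$.

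\textbf{Setup of the minimax.} Fix $k\in\{1,\ldots,n\}$ with $H_k(N,\partial N)\neq 0$ and choose a relative singular cycle $\sigma\co(D^k,\partial D^k)\to(\bar N,\partial N)$ representing a nontrivial class. Identifying $\bar N$ with the submanifold of constant loops inside the contractible component $\MM$ of $H^1$, fix $\tau_0\in\R$ and set $\Sigma_0(q)=(\sigma(q),\tau_0)$. The minimax class is
\[
\Gamma=\big\{\Sigma\in C^0(D^k,\MM\times\R):\Sigma|_{\partial D^k}=\Sigma_0|_{\partial D^k}\big\},
\]
with associated level
\[
c_\varepsilon=\inf_{\Sigma\in\Gamma}\;\max_{q\in D^k}\;\LLL_\varepsilon\big(\Sigma(q)\big).
\]

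\textbf{Uniform bounds.} Testing $\Sigma=\Sigma_0$ and using $\EE(\sigma(q))=0$ together with the compactness of $\sigma(D^k)$ gives the upper bound
\[
c_\varepsilon\leq \rme^{\tau_0}\max_{\sigma(D^k)}U+P(\tau_0)=:K_2
\]
for $\varepsilon\leq 1$. The $\varepsilon$-uniform lower bound $c_\varepsilon\geq K_1$ is the heart of the argument: assuming toward a contradiction that some $\Sigma\in\Gamma$ has $\max\LLL_\varepsilon\circ\Sigma<K_1$, the image of $\Sigma$ lies in $\{\LLL_\varepsilon<K_1\}$. Combining the negative gradient flow of $\LLL_\varepsilon$ on $\MM\times\R$ with the Morse-theoretic rearrangement of handles of $V|_{\bar N}$ advertised in the introduction, I deform the sublevel set so as to retract the image of $\Sigma$ into a region meeting $\bar N$ only along $\partial N$, compatibly with $\Sigma_0|_{\partial D^k}$. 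Composing with the projection to $Q$ then produces a nullhomotopy of $\sigma$ relative to $\partial D^k$ inside $(\bar N,\partial N)$, contradicting $[\sigma]\neq 0$. The constant $K_1>0$ is fixed small, and $\varepsilon\in(0,K_1)$ is taken small enough that the above deformation is $\varepsilon$-uniform.

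\textbf{Monotonicity trick.} Since $\LLL_\varepsilon$ is pointwise nondecreasing in $\varepsilon$, so is $c_\varepsilon$, hence differentiable at almost every $\varepsilon\in(0,K_1)$. Given $\varepsilon\in(0,K_1)$, pick $\varepsilon_0\in(0,\varepsilon)$ at a point of differentiability. Struwe's standard argument, applied to almost-minimising $\Sigma_j\in\Gamma$ along parameters $\varepsilon_j\searrow\varepsilon_0$, produces a Palais-Smale sequence $(x_j,\tau_j)$ for $\LLL_{\varepsilon_0}$ at level $c_{\varepsilon_0}\in[K_1,K_2]$. Proposition \ref{psprop} then extracts a convergent subsequence, whose limit is the desired critical point of $\LLL_{\varepsilon_0}$ at value in $[K_1,K_2]$.

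\textbf{Main obstacle.} The delicate step is the $\varepsilon$-uniform lower bound $c_\varepsilon\geq K_1$: for any $q_0\in Q$ and sufficiently small $\varepsilon$, one can make $\LLL_\varepsilon(q_0,\tau)$ arbitrarily small by choosing $\tau$, so the sublevel set $\{\LLL_\varepsilon<K_1\}$ envelops constant loops at essentially every point of $Q$ and a naive deformation simply slips the cycle off $N$ into that region. The Morse-theoretic rearrangement of handles of the negative set is precisely what forces a controlled retraction preserving the linking class, and its direct combination with the gradient flow on $\MM\times\R$, in place of the asymptotic flow-box construction of \cite{bpv09,bprv13}, is the principal new ingredient of the argument.
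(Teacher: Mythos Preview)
Your linking class $\Gamma$ does \emph{not} give an $\varepsilon$-uniform lower bound, and in fact $c_\varepsilon\to 0$ as $\varepsilon\to 0$. The reason is that $\Gamma$ contains competitors that never leave the submanifold of constant loops: for a continuous function $\tau\co D^k\to\R$ with $\tau|_{\partial D^k}=\tau_0$, the map $\Sigma(q)=(\sigma(q),\tau(q))$ lies in $\Gamma$ and has
\[
\LLL_\varepsilon\big(\Sigma(q)\big)=\rme^{\tau(q)}U\big(\sigma(q)\big)+\varepsilon P\big(\tau(q)\big),
\]
since $\EE(\sigma(q))=0$. On $\partial D^k$ one has $U=0$, so the boundary value is $\varepsilon P(\tau_0)$. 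In the interior, choosing $\tau(q)$ close to $\tfrac12\log\!\big(\varepsilon/U(\sigma(q))\big)$ (and interpolating in a thin collar where $U\circ\sigma$ is small) yields $\max_q\LLL_\varepsilon(\Sigma(q))\leq C\sqrt{\varepsilon}$ for a constant $C$ depending only on $\max_{\sigma(D^k)}U$ and $\tau_0$. Hence $c_\varepsilon\leq C\sqrt{\varepsilon}$, and no fixed $K_1>0$ can bound $c_\varepsilon$ from below. Your proposed contradiction argument cannot repair this: projecting the competitor $\Sigma=(\sigma,\tau)$ back to $Q$ via $x\mapsto x(0)$ simply returns $\sigma$ itself, not a nullhomotopy.

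What is missing is precisely the mechanism the paper builds. The linked set $\BB$ in Section~\ref{bbbbbb} consists of \emph{non-constant} loops with $\EE(x)=r>0$ and $x(0)\in Q_B$; on such loops the kinetic term $\rme^{-\tau}\EE(x)$ prevents sending $\tau\to-\infty$ for free, and one gets the uniform estimate $\LLL\geq 2\sqrt{r\delta}$ on $\BB$. To force any filling to cross $\BB$, the linking cycle $\AAA$ is not a bare $(k-1)$-sphere at a single $\tau$-level but a closed $k$-cycle built from $|c|\times\{\tau_0\}$, $|\partial c|\times[\tau_0,\tau_1]$, and a chain $|c'|\times\{\tau_1\}$ of \emph{non-constant} loops with $\UU\leq-\delta/2$; this last piece is produced by the Morse handle rearrangement (Lemma~\ref{construtilde} feeding into Lemma~\ref{constrofcs}). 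The rearrangement is therefore used to manufacture the chain $c'$, not---as your sketch suggests---to perform a retraction of a low sublevel set onto $\partial N$. Finally, since $\LLL_\varepsilon$ satisfies the Palais--Smale condition (Proposition~\ref{psprop}), Struwe's monotonicity trick is not needed here: once the genuine linking estimate is in place, the absence of critical values in $[1/\nu,\nu]$ directly trivializes the action window (Lemma~\ref{prodwind}) and collapses the intersection number with $\BB$, contradicting $[c]\neq 0$.
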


\subsection{Begin of the proof\label{begproof}}

Arguing by contradiction we find a sequence
$\varepsilon_{\nu}$ of positive real numbers
such that for all $\varepsilon\in(0,\varepsilon_{\nu})$
the interval $[1/\nu,\nu]$
contains no critical value of $\LLL_{\varepsilon}$.
We will lead this assumption
to a contradiction in several steps
organized as separate sections.

\subsection{A deformation of the negative set\label{defneg}}

By \eqref{ac1a} the negative potential function $U$
has no critical point in the complement
of the compact set $\hat{Q}$.
Hence, in view of \eqref{reg}
the level sets $\{U=\pm\delta\}$
are isotopic to $\{U=0\}$
for $\delta>0$ sufficiently small.
An isotopy is given by following
the (negative) gradient flow lines of $U$.
We set 
\[
N_{\pm\delta}=\{U>\pm\delta\}.
\]
Notice that $N=N_0$.

\begin{lem}
  \label{distisotop}
  There exist $\delta>0$
  and an open subset $Q_B\subset Q$
  such that the pairs
  $(Q_B,\partial Q_B)$ and $(N_{\delta},\partial N_{\delta})$
  are isotopic and
  the minimal distance
  $\dist(\partial N_{\delta},Q_B)$
  is positive.
\end{lem}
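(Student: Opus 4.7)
The plan is to push $N_\delta$ a small fixed normal distance into itself via an ambient isotopy of $Q$ supported in a uniform Riemannian tubular neighborhood of $\partial N_\delta$; the resulting set $Q_B$ will automatically sit at positive Riemannian distance from $\partial N_\delta$.

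First I would choose $\delta>0$ so small that the slab $\{|U|\le 2\delta\}$ contains no critical points of $U$. Such a $\delta$ exists since \eqref{ac1a} forbids critical points outside $\hat Q$, and the set of critical values of $U$ inside the compact set $\hat Q$ is compact and, by \eqref{reg}, excludes $0$. Then $\partial N_\delta=\{U=\delta\}$ is a smooth embedded hypersurface on which $|\grad U|\ge c_1>0$ (compactness in $\hat Q$ together with \eqref{ac1a}), and whose second fundamental form, bounded in norm by $|\Hess U|/|\grad U|$ up to a constant, is globally bounded (by compactness in $\hat Q$ and by \eqref{ac1b} in the end).

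Next, combining the bounded geometry of $(Q,g)$ provided by \eqref{inj}, \eqref{cb} with the global bound on the second fundamental form just derived, a standard Jacobi field comparison argument produces an $\epsilon>0$ such that the normal exponential map
\[
\exp^\perp\co\partial N_\delta\times(-\epsilon,\epsilon)\lra Q
\]
is a diffeomorphism onto an open tube $T\subset Q$; on $T$ the signed distance $\sigma$ to $\partial N_\delta$, positive on the $N_\delta$-side, is smooth with $|\grad\sigma|=1$ and realizes the global Riemannian distance: $\dist(p,\partial N_\delta)=|\sigma(p)|$ for all $p\in T$. I regard the construction of this uniform tube as the main analytic step of the proof.

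With $T$ in hand, choose a smooth cutoff $\chi\co\R\to[0,1]$ which is identically $1$ on $[-\epsilon/4,\epsilon/4]$ and supported in $(-\epsilon/2,\epsilon/2)$, set $Y=\chi(\sigma)\grad\sigma$ on $T$ and $Y=0$ outside, and fix $t_0\in(0,\epsilon/8)$. Because $Y$ is smooth and $|Y|\le 1$, its flow $\psi_t$ is defined on all of $Q$. Define $Q_B:=\psi_{t_0}(N_\delta)$; the family $\{\psi_{st_0}\}_{s\in[0,1]}$ is then an ambient isotopy of $Q$ realizing the isotopy of pairs $(N_\delta,\partial N_\delta)\simeq(Q_B,\partial Q_B)$. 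For the distance assertion, if $p\in N_\delta\setminus T$ then $\psi_{t_0}(p)=p$ has $\dist(p,\partial N_\delta)\ge\epsilon$; and if $p\in N_\delta\cap T$ then the one-dimensional ODE $\dot\sigma=\chi(\sigma)$ along the flow yields $\sigma(\psi_{t_0}(p))\ge\min(\sigma(p)+t_0,\epsilon/4)\ge t_0$, whence $\dist(\psi_{t_0}(p),\partial N_\delta)\ge t_0$ by the distance-realization property of $\sigma$ on $T$. Therefore $\dist(\partial N_\delta,Q_B)\ge t_0>0$, completing the proof.
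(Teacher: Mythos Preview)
Your proof is correct but follows a genuinely different route from the paper's.

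The paper does not build a tubular neighborhood of $\partial N_\delta$ at all. Instead it introduces the auxiliary function
\[
f=\frac{U}{\sqrt{1+|\grad U|^2}}
\]
and sets $Q_B=\{f>\sqrt 2\,\delta\}\subset N_\delta$. Using \eqref{ac1a} and \eqref{ac1b} one checks that $|\grad f|$ is uniformly positive on a slab $\{|f|<\delta'\}$, so the flow of $\grad f/|\grad f|^2$ furnishes the isotopy. For the distance estimate the paper observes that $q\in Q_B$ forces $U(q)>\delta\big(1+|\grad_qU|\big)$; combining this with a Gr\"onwall bound $|\grad_{c(s)}U|\le \rme^{Ks}|\grad_qU|$ along radial geodesics (again from \eqref{ac1a}, \eqref{ac1b}) gives $U>\delta$ on a ball of fixed radius $r$ about every $q\in Q_B\setminus\hat Q$, while compactness of $\hat Q$ handles the remaining points.

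What the two approaches buy: your argument is conceptually clean---once the uniform tube exists, everything is a one--line ODE---but it relies on the (true, yet not entirely trivial) fact that a closed hypersurface with bounded second fundamental form in a manifold of bounded geometry admits a uniform normal collar; in particular you invoke both \eqref{inj} and \eqref{cb}. The paper's argument is more hands-on but lighter on hypotheses: it uses only \eqref{ac1a}, \eqref{ac1b} and the positivity of $\inj g$, and avoids the tubular-neighborhood black box by letting the weight $1/\sqrt{1+|\grad U|^2}$ absorb the possible unboundedness of $|\grad U|$. The Gr\"onwall step here also foreshadows the mean value arguments used later in Lemma~\ref{boudbelow}.
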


\begin{proof}
  Consider the function
  \[
  f=\frac{U}{\sqrt{1+|\grad U|^2}}.
  \]
  The set $Q_B$ is defined by
  \[
  Q_B=\{f>\sqrt{2}\delta\}\subset N_{\delta}.
  \]
  Notice that $\partial N=\{f=0\}$.
  Invoking \eqref{ac1a} and \eqref{ac1b}
  there exists $\delta'>0$,
  which only depends on $K$,
  such that $|\grad f|$ is uniformly positive
  on $\{|f|<\delta'\}$.
  Because the metric on $Q$ is complete
  we can assume by shrinking $\delta'>0$
  that there exists
  a complete vector field $X$ on $Q$
  which coincides with $|\grad f|^{-2}\grad f$
  on $\{|f|<\delta'\}$.
  The flow of $X$
  brings $N$ to $Q_B$
  provided we choose $\delta<\delta'/\sqrt{2}$.
  This yields the desired isotopy.

  In order to show positivity of
  $\dist(\partial N_{\delta},Q_B)$
  consider a point $q$ in $Q_B$.
  Notice that
  \[
  U(q)>\delta\big(1+|\grad_qU|\big).
  \]
  Choose $r\in(0,\inj g)$
  and consider the geodesic ball $B_r(q)$ of radius $r$ about $q$.
  We assume that $B_r(q)$ is contained in $Q\setminus\hat{Q}$.
  Each point $p$ on the boundary $\partial B_r(q)$
  can be connected with the center $q$
  by a radial unit speed geodesic.
  By a mean value argument
  analogously to the application of Gr\"onwall's lemma
  in Lemma \ref{boudbelow}
  we obtain
  \[
  U(q)-U(p)\leq r\rme^{Kr}|\grad_qU|
  \]
  using \eqref{ac1a} and \eqref{ac1b}.
  Combining both estimates yields
  \[
  U(p)>\delta+|\grad_qU|(\delta-r\rme^{Kr}).
  \]
  Choose $r$ such that $r\rme^{Kr}\leq\delta$.
  Hence, $U(p)>\delta$.
  In other words $U>\delta$ on any
  geodesic ball of radius $r$
  in $Q\setminus\hat{Q}$ about points in $Q_B$.
  Because $\hat{Q}$ is compact
  the shortest length of a curve connecting
  points of $\partial N_{\delta}$
  with those of $Q_B$ is positive.
\end{proof}

\subsection{The linked set\label{bbbbbb}}

We define a subset
\[
\BB=\BB'\times\R
\]
of $\MM\times\R$ via
\[
\BB'=\big\{x\in\MM\,|\,\EE(x)=r\;\;\text{and}\;\;x(0)\in Q_B\big\}.
\]
Observe that the length of the curves $x\in\BB'$ is bounded by $\sqrt{2r}$.
In view of Lemma \ref{distisotop} we choose $r>0$ such that
\[
\length(x)\leq\sqrt{2r}<2\dist(\partial N_{\delta},Q_B).
\]
Therefore, $x(S^1)\subset N_{\delta}$.
In other words $\UU(x)\geq\delta$
so that the restriction of $\LLL$ to $\BB$
is bounded from below by $\rme^{-\tau}r+\rme^{\tau}\delta$.
Consequently,
\[
\LLL\geq 2\sqrt{r\delta}
\qquad\text{on}\quad\BB.
\]

\begin{rem}
  \label{forconv}
  We shrink $r>0$ further
  such that the energy functional $\EE$
  has no critical points on
  $\{0<\EE\leq r\}$.
  In view of the positivity assumption
  on the injectivity radius this is not a restriction.  
\end{rem}

\subsection{The linking set\label{aaaaaa}}

By \eqref{lnk} and Section \ref{defneg}
there is a simplicial cycle $c$
in $N_{-\delta}$ relative $\partial N_{-\delta}$
which is non-trivial in homology,
cf.\ \cite{munk63}.
We identify $Q$ with its image in $\MM$.
Observe,
\[
\LLL=-\rme^{\tau}\delta
\qquad\text{on}\quad|\partial c|\times\{\tau\},
\]
where we denote with $|\partial c|$
the geometric realization of the simplicial cycle
$\partial c$
(the boundary taken of the absolute chain $c$)
as a subset of 
$\partial N_{-\delta}$
and $\tau$ is a real number.
For $\tau_0\ll -1$ we find
\[
\LLL\leq\rme^{\tau_0}\max_{|c|}U
\qquad\text{on}\quad|c|\times\{\tau_0\}.
\]
The following lemma will be proved in Section \ref{thoftnp}.

\begin{lem}
  \label{constrofcs}
  There exists a chain $c'$ in $\MM$
  homotopic to $c$
  with boundary fixed
  such that $\UU(x)\leq -\frac12\delta$
  for all $x\in|c'|$.
\end{lem}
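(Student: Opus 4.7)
The strategy is to construct $c'$ by flowing $c$ within $Q \subset \MM$ (viewed as the submanifold of constant loops) along a cut-off descending vector field for $U$; on constant loops one has $\UU(x) = U(x)$, so it suffices to push the support of $c$ into the sublevel set $\{U \leq -\delta/2\}$ while keeping $|\partial c|$ fixed. Throughout, $|c'|$ and the chain homotopy will actually lie in $Q \subset \MM$.

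Let $\rho\co \R \to [0,1]$ be smooth with $\rho \equiv 0$ on $(-\infty, -3\delta/4]$ and $\rho \equiv 1$ on $[-\delta/2, \infty)$, and put
\[
X = -\rho(U)\, \frac{\grad U}{1 + |\grad U|^2}.
\]
Then $X$ is smooth and bounded, so by completeness of $(Q,g)$ (from \eqref{inj}) its flow $\phi^s$ is defined for all $s \in \R$. Along orbits $\frac{\rmd}{\rmd s}(U\circ\phi^s) = -\rho(U)\,|\grad U|^2/(1 + |\grad U|^2) \leq 0$, strictly negative wherever $U > -3\delta/4$ and $\grad U \neq 0$; and since $X$ vanishes identically on $\{U \leq -3\delta/4\}$, the flow fixes $|\partial c| \subset \partial N_{-\delta} = \{U = -\delta\}$.

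The crux is to ensure that for every $q \in |c|$ the orbit $\phi^s(q)$ enters $\{U \leq -\delta/2\}$ in finite time, i.e.\ that $|c|$ is disjoint from the stable manifolds of critical points of $U$ with critical value above $-\delta/2$. By \eqref{ac1a} every critical point of $U$ lies in the compact set $\hat{Q}$, so after a small compactly supported perturbation (preserving \eqref{reg}, \eqref{ac1a}, \eqref{ac1b}, \eqref{ac2}) we may assume $U$ is Morse, with only finitely many critical levels. We then invoke the rearrangement of Morse handles of the negative set referred to in the introduction: using handle-sliding on the $-U$-handle decomposition of $N_{-\delta}$ rel $\partial N_{-\delta}$, and the fact that a relative $d$-cycle in $H_d(N_{-\delta}, \partial N_{-\delta})$ admits a representative disjoint from the interior of any top-dimensional handle --- equivalently, from the basin of attraction of any local minimum of $U$ --- we replace $c$, within its relative homology class and with boundary fixed, by a cycle whose support avoids the stable manifolds of all critical points of $U$ with value exceeding $-\delta/2$.

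With $c$ so positioned, compactness of $|c|$ together with the uniform descent rate on $\{U \geq -\delta/2\}$ furnishes a time $T > 0$ for which $\phi^T(|c|) \subset \{U \leq -\delta/2\}$. Setting $c' := (\phi^T)_\ast c$ yields a chain in $Q \subset \MM$ with $\UU(x) = U(x) \leq -\delta/2$ for every $x \in |c'|$, and the continuous map $(q,s) \mapsto \phi^s(q)$ from $|c| \times [0,T]$ into $Q \subset \MM$, being constant in $s$ on $|\partial c|$, is the desired chain homotopy from $c$ to $c'$ with boundary fixed. The main difficulty is the handle-rearrangement step: local minima of $U$ with value above $-\delta/2$ have full-dimensional stable manifolds and cannot be avoided by a generic perturbation of $c$, so the Morse-theoretic restructuring of the negative set is essential and is precisely what distinguishes the present proof from \cite{bpv09, bprv13}.
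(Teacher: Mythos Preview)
Your approach keeps $c'$ inside the submanifold $Q\subset\MM$ of constant loops and tries to push $|c|$ into $\{U\leq-\delta/2\}$ by the cut-off negative gradient flow of $U$. The gap is in the sentence where you pass from ``disjoint from the basin of attraction of any local minimum of $U$'' to ``avoids the stable manifolds of \emph{all} critical points of $U$ with value exceeding $-\delta/2$''. Handle-sliding and placing $c$ in the $d$-skeleton deals only with index~$0$ critical points; for a critical point $p$ with $1\leq\ind(p)\leq d$ and $U(p)>-\delta/2$, the stable manifold $W^s(p)$ under $-\grad U$ has dimension $n-\ind(p)\geq n-d$, and a generic $d$-cycle meets it in a set of dimension $d-\ind(p)\geq 0$. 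Points of $|c|$ on such a $W^s(p)$ converge to $p$ under $\phi^s$ and never reach $\{U\leq-\delta/2\}$; there is no ``uniform descent rate'' near saddles. In fact the statement you implicitly need --- that $c$ can be homotoped rel $\partial c$ into $\{U\leq-\delta/2\}$ \emph{within $Q$} --- is false in general: take $Q=S^1\times\R$ with $U(\theta,t)=f(\theta)-t^2$ for a positive Morse function $f$ on $S^1$; then $N$ is an annulus, a generator of $H_1(N,\partial N)$ is an arc from one boundary circle to the other, and every such arc must cross $\{t=0\}$ where $U\geq\min f>0$.

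The paper's proof is genuinely different because it leaves the submanifold of constant loops. First it replaces $U$ by $\tilde U\geq U$ with no local minima on $N$ (Lemma~\ref{construtilde}), so that all remaining positive critical points have index $\geq1$ and the invariant set $\NN$ of connecting flow lines has dimension $\leq n-1$. Then each point $q\in|c|$ is replaced by a small \emph{non-constant} loop (via an auxiliary vector field $Y$ chosen transverse to $\NN$); being one-dimensional, such a loop meets $\NN$ in only finitely many points. Flowing these loops by the negative gradient-like field $Z$ sends all but finitely many points of each loop into $\{\tilde U\leq-\delta/2\}$; since finitely many points have measure zero in $S^1$, one obtains $\UU(x)\leq\int\tilde U(x)\,\rmd t\leq-\delta/2$. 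The passage to non-constant loops is exactly what converts the pointwise requirement (which fails, as above) into an integral one (which succeeds).
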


\begin{rem}
  \label{infwcnonempty}
  Notice that all curves $x$ in the chain $|c'|$
  leave $N_{\delta}$
  if $x(0)$ is in the closure of $Q_B$.
  This is because the negative potential integral $\UU$
  of $x$ is negative by Lemma \ref{infwcnonempty}.
  Therefore,
  \[
  r<\inf\{\EE(x)\,|\,x\in|c'|\;\;\text{and}\;\;x(0)\in Q_B\}
  \]
  by the choice of $r$ in Section \ref{bbbbbb}.
\end{rem}

By compactness of $|c'|$
the energy is bounded on $|c'|$.
Therefore,
we find $\tau_1\gg 1$
such that
\[
\LLL<0
\qquad\text{on}\quad|c'|\times\{\tau_1\}
\]
uniformly.
Let $\AAA$ be the union
\[
\AAA=
\Big(|\partial c|\times [\tau_0,\tau_1]\Big)\cup
\Big(|c|\times\{\tau_0\}\Big)\cup
\Big(|c'|\times\{\tau_1\}\Big)
\]
so that we obtain
\[
\sup_{\AAA}\LLL\leq\rme^{\tau_0}\max_{|c|}U.
\]
In view of the assumption in Section \ref{begproof}
we choose $\nu\in\N$
such that $1/\nu<\sqrt{r\delta}$.
We choose $\tau_0\ll -1$ such that
\[
\sup_{\AAA}\LLL<\frac{1}{\nu}.
\]
This implies
\[
\sup_{\AAA}\LLL<\inf_{\BB}\LLL.
\]
In particular $\AAA$ and $\BB$
are disjoint.

\subsection{A chain\label{cccccc}}

Define
\[
\CC=
\Big(|c|\times [\tau_0,0]\Big)\cup
\bigcup_{s\in[0,1]}\Big(|c_s|\times\{0\}\Big)\cup
\Big(|c'|\times [0,\tau_1]\Big)
\]
where $c_s$ is the homotopy from $c_0=c$ to $c_1=c'$
in $\MM$ relative $\partial c$
which we will construct in Section \ref{thoftnp},
cf.\ Lemma \ref{constrofcs}.
By construction $\AAA$ and $\CC$
can be given the structure of simplicial chains
such that $\partial\CC=\AAA$.
In particular $\AAA$ is a cycle.
By compactness of $\CC$ we can assume
that additionally
\[
\sup_{\CC}\LLL<\nu.
\]
Increasing $\nu$
amounts to decreasing $\tau_0$.
But this does not effect the above estimates.

\subsection{The minmax argument\label{minmax}}

By compactness of $\AAA$ and $\CC$
we find $\varepsilon\in(0,\varepsilon_{\nu})$
such that
\[
\sup_{\AAA}\LLL_{\varepsilon}<1/\nu<\inf_{\BB}\LLL_{\varepsilon}
\]
and
\[
\sup_{\CC}\LLL_{\varepsilon}<\nu.
\]

\begin{lem}
  \label{prodwind}
  The action window set
  $\LLL_{\varepsilon}^{-1}\big([1/\nu,\nu]\big)$
  is diffeomorphic to
  \[\{\LLL_{\varepsilon}=1/\nu\}\times[0,\nu-1/\nu].\]
\end{lem}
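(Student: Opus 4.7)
The proof is the standard noncritical interval lemma in Morse theory on a Hilbert manifold, powered by the Palais--Smale property already established in Proposition \ref{psprop} together with the contradiction hypothesis from Section \ref{begproof} that $\LLL_{\varepsilon}$ has no critical value in $[1/\nu,\nu]$. I will use the product Riemannian structure on $\MM\times\R$ (each factor already carries a Hilbert Riemannian metric) and write $\nabla\LLL_{\varepsilon}$ for the corresponding gradient, and $W:=\LLL_{\varepsilon}^{-1}([1/\nu,\nu])$ for the action window.

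First I would show that
\[
\inf_{W}\|\nabla\LLL_{\varepsilon}\|>0.
\]
Suppose not: then there exists $(x_k,\tau_k)\in W$ with $\|\nabla\LLL_{\varepsilon}(x_k,\tau_k)\|\to0$. Passing to a subsequence I can assume $\LLL_{\varepsilon}(x_k,\tau_k)\to c$ for some $c\in[1/\nu,\nu]$, so $(x_k,\tau_k)$ is a Palais--Smale sequence at level $c$. Proposition \ref{psprop} yields a convergent subsequence, whose limit is a critical point with critical value $c$. This contradicts the hypothesis of Section \ref{begproof}.

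Next I would define the rescaled antigradient
\[
X=-\frac{\nabla\LLL_{\varepsilon}}{\|\nabla\LLL_{\varepsilon}\|^{2}}
\]
on an open neighbourhood of $W$. By the first step, $\|X\|$ is uniformly bounded on $W$, while $X\cdot\LLL_{\varepsilon}\equiv -1$. Because $\MM\times\R$ is complete (Section \ref{admcurves}) and $\|X\|$ is uniformly bounded on $W$, flow lines of $X$ starting in $W$ exist for all $t$ for which the action value stays in $[1/\nu,\nu]$: their lengths are at most $\|X\|_{\infty}\cdot|t|$, so they cannot escape to infinity in finite time, and so long as the value lies in $[1/\nu,\nu]$ they do not leave $W$. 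Consequently the flow $\phi_t$ of $X$ is defined on all of $W$ for every $t$ with $\LLL_{\varepsilon}+t\in[1/\nu,\nu]$, and it decreases the action at unit speed.

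Finally I would define
\[
\Phi\co\{\LLL_{\varepsilon}=1/\nu\}\times[0,\nu-1/\nu]\lra W,\qquad \Phi(p,t)=\phi_{-t}(p).
\]
This is smooth, $\LLL_{\varepsilon}(\Phi(p,t))=1/\nu+t$, and it is a bijection: given $q\in W$, the flow $t\mapsto\phi_t(q)$ reaches the level $\{\LLL_{\varepsilon}=1/\nu\}$ precisely at time $t_q=\LLL_{\varepsilon}(q)-1/\nu\in[0,\nu-1/\nu]$, so $\Phi(\phi_{t_q}(q),t_q)=q$. Smoothness of the inverse is immediate from transversality of $X$ to the level sets (which is just $X\cdot\LLL_{\varepsilon}=-1$). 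The only genuine difficulty is the first step, where the combination of Palais--Smale and the absence of critical values in the window must be invoked carefully; the remaining construction is the routine deformation argument.
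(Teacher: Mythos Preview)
Your proof is correct and follows essentially the same approach as the paper's: both normalize the gradient of $\LLL_{\varepsilon}$, invoke the Palais--Smale property (Proposition~\ref{psprop}) together with the absence of critical values in $[1/\nu,\nu]$ to bound its length on the window, and then integrate to obtain the product structure. The only cosmetic difference is that the paper cuts off the normalized gradient via a partition of unity so that completeness of $\MM\times\R$ yields a global flow, whereas you argue directly that flow lines remain in $W$ for the relevant time interval; both variants are standard.
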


\begin{proof}
  Notice that by Section \ref{begproof} there are no
  critical points in
  $\WW=\LLL_{\varepsilon}^{-1}\big([1/\nu,\nu]\big)$.
  We define a vector field
  \[
  \xi=\frac{\grad\LLL_{\varepsilon}}{|\grad\LLL_{\varepsilon}|_1^2}
  \]
  on $\WW$ and consider its flow, cf.\ \cite{abb01,lang99}.
  By the Palais-Smale property in Section \ref{ps}
  the vector field
  $\xi$ is of bounded length
  $\sup_{\WW}|\xi|_1<\infty$.
  We can assume that $\xi$
  is extended to $\MM\times\R$
  via a partition of unity
  such that $\xi$ has support
  in a slightly larger action window.
  By completeness of $\MM\times\R$
  the flow $\varphi$ of $\xi$
  is global.
  The desired diffeomorphism is
  \[
  \big((x,\tau),s\big)\longmapsto\varphi_s(x,\tau)
  \]
  for
  $(x,\tau)\in\{\LLL_{\varepsilon}=1/\nu\}$
  and
  $s\in [0,\nu-1/\nu]$.
\end{proof}

Therefore,
the relative cycle $\CC$
in $\big(\{\LLL_{\varepsilon}\leq\nu\},\{\LLL_{\varepsilon}<1/\nu\}\big)$
is homologically trivial.
Notice,
that by
Remark \ref{infwcnonempty}
the intersection
$\CC\cap\BB$
is non-empty
and that
$\AAA=\partial\CC$ and
$\BB$ are disjoint.
Moreover,
$\BB\subset\MM\times\R$ is a hypersurface
defined via a smooth function,
see Remark \ref{forconv}.
Similarly,
$\partial\BB$ is contained
in the preimage of $\partial Q_B$
under the surjective submersion
\[
\begin{array}{rcl}
  \ev:\MM\times\R & \lra & Q \\
  (x,\tau) & \longmapsto & x(0)
\end{array}
\]
and $\partial Q_B$
is the zero set of a smooth function.
By a generic {\it a posteriori}
choice of $r$ and $\delta$
we can assume that $\CC\cap\BB$
defines a cycle in $(\BB,\partial\BB)$,
which is trivial by the above discussion.

Let $\DD$ be the intersection of $\CC$
with
\[
\big\{
(x,\tau)
\,|\,
\EE(x)\leq r
\;\;\text{and}\;\;
x(0)\in Q_B
\big\}.
\]
$\DD$ has the structure of a simplicial
chain with boundary in the union of
\[
\BB
\;\;\text{with}\;\;
\ev^{-1}(\partial Q_B)
\;\;\text{and with}\;\;
|c|\times\{0\}.
\]
Therefore,
the cycles $c\cap\bar{Q}_B$
and $\ev(\CC\cap\BB)$
are homologous in $(Q_B,\partial Q_B)$
via $\ev(\DD)$.
The triviality of $\CC\cap\BB$
implies the triviality of $\ev(\CC\cap\BB)$
so that the cycle $c\cap\bar{Q}_B$ is
trivial in relative homology
of $(Q_B,\partial Q_B)$.
Because $(N,\partial N)$ and $(Q_B,\partial Q_B)$ are isotopic
the cycle $c$ is trivial in $(N,\partial N)$.
This contradicts the choice of $c$.

\subsection{Handles of the negative potential\label{thoftnp}}

In order to finish the proof
of Proposition \ref{existence}
we prove Lemma \ref{constrofcs}.

\begin{lem}
  \label{construtilde}
  There exists a smooth function $\tilde{U}$ on $Q$
  and a compact subset $\hat{N}\subset N$
  such that
  \begin{itemize}
  \item $\tilde{U}\geq U$ and $\tilde{U}=U$ on $Q\setminus\hat{N}$ and
  \item the restriction of $\tilde{U}$ to $N$ is a Morse function
    without local minima.
  \end{itemize}
\end{lem}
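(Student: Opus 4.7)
The critical set $\crit(U) \cap N$ is compact by (ac1a), being contained in the compact set $\hat Q \cap N$. My strategy is to take $\hat N$ to be a compact submanifold-with-boundary neighbourhood of this set inside $N$, and to define $\tilde U := U + Cg$ for a large constant $C$, where $g \geq 0$ is a bump supported in $\hat N$ whose Morse-critical set consists only of points of index $\geq 1$. For $C$ large, the critical structure of $\tilde U$ in $\hat N$ will be governed entirely by that of $g$.

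For the construction I would first choose $\hat N \subset N$ compact with smooth boundary $\partial \hat N \subset N \setminus \hat Q$ and with interior containing $\crit(U) \cap N$, arranged so that every connected component of $\hat N$ meets $\partial \hat N$ (for instance, a finite union of small closed tubes around the components of $\crit(U) \cap N$). Next I would build a smooth Morse function $g_0 \colon \hat N \to [0, \infty)$ with $g_0|_{\partial \hat N} = 0$, $g_0 > 0$ in the interior, and all interior critical points of index $\geq 1$. Such a $g_0$ exists because a compact manifold each of whose connected components meets its boundary admits a handle decomposition relative to the boundary using only handles of positive index, and the associated self-indexing Morse function has precisely the required form. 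Finally I would set $g := e^{-1/g_0}$ on the interior of $\hat N$ and extend by $0$ outside; the resulting $g$ is smooth on $Q$, vanishes to infinite order on $\partial \hat N$, and has the same critical set and indices as $g_0$, since $t \mapsto e^{-1/t}$ is a diffeomorphism of $(0, \infty)$ with positive derivative.

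Now put $\tilde U := U + Cg$. Then $\tilde U \geq U$ and $\tilde U = U$ on $Q \setminus \hat N$ by construction. On $N \setminus \hat N$, $\tilde U = U$ has no critical points since $\hat N \supset \crit(U) \cap N$. On $\hat N$ itself, the equation $d\tilde U = dU + C\, dg = 0$ forces $|dU| = C|dg|$; since $|dU|$ is bounded on the compact set $\hat N$ while $|dg|$ is uniformly positive outside any fixed neighbourhood of $\crit(g)$, for $C$ sufficiently large the critical points of $\tilde U$ in $\hat N$ must lie arbitrarily close to $\crit(g)$. At each Morse critical point $p$ of $g$, $\Hess \tilde U = \Hess U + C\,\Hess g$ is dominated by $C\,\Hess g$ for $C$ large and hence non-degenerate with the same index as $g$ at $p$. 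The implicit function theorem applied to the family $d(g + U/C) = 0$ as $1/C \to 0$ identifies the critical points of $\tilde U$ bijectively with $\crit(g)$, with matching indices. Hence $\tilde U$ is Morse on $N$ with critical points only of index $\geq 1$, so no local minima.

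The main obstacle is the construction of $g_0$: one needs a handle decomposition of $(\hat N, \partial \hat N)$ using only positive-index handles, which requires that every connected component of $\hat N$ touch $\partial \hat N$, for an isolated component of $\hat N$ would force an index-$0$ handle. The subsidiary issues—smoothly tapering the bump to zero across $\partial \hat N$ without introducing spurious critical points in the interior, and ensuring that $C\, \Hess g$ dominates $\Hess U$ at every critical point—are handled respectively by the substitution $g := \exp(-1/g_0)$ and by the compactness of $\hat N$.
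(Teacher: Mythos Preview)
Your argument has a genuine gap: it does not control the critical points of $\tilde U$ that appear near $\partial\hat N$. Because $g=\exp(-1/g_0)$ vanishes to infinite order on $\partial\hat N$, the critical set of $g$ as a function on $Q$ is $\crit(g_0)\cup\big(Q\setminus\Int\hat N\big)$, not just $\crit(g_0)$. Your localisation step therefore only confines the zeros of $d\tilde U$ in $\hat N$ to a neighbourhood of $\crit(g_0)\cup\partial\hat N$, and the implicit function theorem says nothing near the degenerate piece $\partial\hat N$. New critical points do in fact appear there, and they can be local minima. For a concrete failure, let $p\in N$ be a non-degenerate local minimum of $U$, take $\hat N$ to be a small ball about $p$, and $g_0$ a radial bump with its sole (index-$n$) maximum at $p$. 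Along any radius $\partial_r U>0$ and $\partial_r g<0$; at $\partial\hat N$ one has $\partial_r\tilde U=\partial_r U>0$, while moving inward $C|\partial_r g|$ grows from $0$ and eventually overtakes $\partial_r U$, so $\partial_r\tilde U$ vanishes at some radius $r_1(C)$ near the boundary. The minimum of $\tilde U$ over $\hat N$ is then attained in the interior, away from the index-$n$ point $p$; after breaking the radial symmetry this yields a genuine Morse local minimum of $\tilde U$ on $N$. The same phenomenon occurs in dimension one already.

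The paper avoids this by a different route: first perturb $U$ by a small compactly supported amount to a Morse function $\tilde U\geq U$ (possibly with local minima in $N$), then apply Milnor's rearrangement and first cancellation theorems to put the positive critical values into self-indexing order and to cancel each index-$0$ critical point against an index-$1$ partner, at every step only \emph{increasing} $\tilde U$ so that $\tilde U\geq U$ persists. The required index-$1$ partner joined by a single gradient trajectory is supplied by a mountain-pass argument.
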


\begin{proof}
  By a local perturbation of $U$ we find a
  function $\tilde{U}$
  as in the lemma,
  see \cite[Section 2]{mil65},
  but eventually with positive local minima.
  With \eqref{ac1a}
  we find a vector field $X$
  on $Q$ which
  equals
  \[\frac{\grad U}{|\grad U|^2}\]
  on $Q\setminus\hat{Q}$,
  has bounded length
  (is therefore complete),
  and is gradient-like for $\tilde{U}$ on $N$,
  \cite[Lemma 3.2]{mil65}.
  The aim is to remove all local minima of $\tilde{U}$ by
  a cancellation process as described in \cite{mil65}.
  
  With
  \cite[Lemma 2.8]{mil65}
  we can assume that different critical points
  in $N$ have different critical values.
  For regular values $0\leq a<b$
  the manifold with boundary
  \[
  W_{\!ab}=\tilde{U}^{-1}\big([a,b]\big)
  \]
  is called an
  {\bf action window set}.
  $W_{\!ab}$ is called {\bf regular}
  if $\tilde{U}$ has no critical point
  in $W_{\!ab}$.
  Using the flow of $X$ as in Lemma \ref{prodwind}
  $W_{\!ab}$ is diffeomorphic to
  $\{\tilde{U}=a\}\times[0,b-a]$,
  cf.\ \cite[Theorem 3.4]{mil65}.
  $W_{\!ab}$ is called {\bf elementary}
  if $\tilde{U}$ has exactly one critical point $q_0$
  on $W_{\!ab}$.
  The flow lines of $X$
  whose closure does not intersect $q_0$
  connect $\{\tilde{U}=a\}$
  with $\{\tilde{U}=b\}$.
  The intersection $S_L(q_0)$
  of flow lines of $X$ with $\{\tilde{U}=a\}$
  which connect with $q_0$
  in forward time
  is diffeomorphic to a sphere
  of dimension $\ind(q_0)-1$,
  where $\ind(q_0)$ denotes the
  Morse index of $q_0$.
  The intersection $S_R(q_0)$
  with $\{\tilde{U}=b\}$
  in backward time
  is diffeomorphic to a sphere
  of dimension $n-\ind(q_0)-1$.
  We call $S_L(q_0)$ the
  {\bf left-hand sphere} of $q_0$
  and $S_R(q_0)$ the {\bf right-hand sphere}
  of $q_0$,
  see \cite[Definition 3.9]{mil65}.
  
  In order to alter $\tilde{U}$
  into a self-indexing-like
  Morse function on $N$
  we consider the composition
  \[
  W_{\!ac}=W_{\!ab}\cup W_{\!bc}
  \]
  of elementary action window sets.
  The critical points are denoted by
  $q_0\in W_{\!ab}$
  and
  $q_1\in W_{\!bc}$.
  If $\ind(q_1)\leq\ind(q_0)$
  a compactly supported diffeotopy of
  $\{\tilde{U}=b\}$
  yields a gradient-like
  vector field of $\tilde{U}$
  which coincides with
  $X$ near $\partial W_{\!ac}$
  and outside a compact set
  such that the right- and left-hand
  spheres $S_R(q_0)$ and $S_L(q_1)$
  in $\{\tilde{U}=b\}$ are disjoint,
  cf.\ \cite[Theorem 4.4]{mil65}.
  Therefore,
  the compact sets $K(q_0)$,
  resp., $K(q_1)$,
  of flow lines of the vector field
  (again denoted by) $X$
  connecting $q_0$,
  resp., $q_1$,
  in $W_{\!ac}$ are disjoint.
  As in \cite[Theorem 4.1]{mil65}
  we can increase the function
  $\tilde{U}$ in a neighbourhood
  of $K(q_0)$ keeping the critical
  points $q_0$ and $q_1$
  such that $X$ is still gradient-like
  and the critical value of $q_0$
  lies above the critical value of $q_1$.
  Moreover,
  near $\partial W_{\!ac}$ the Morse
  function is not changed.
  In other words,
  after a rearrangement of the critical points
  on $N$ we obtain a function $\tilde{U}$
  on $Q$
  which coincides with $U$
  on $Q\setminus\hat{N}$ for a compact subset
  $\hat{N}$ of $N$
  such that $\tilde{U}\geq U$
  and for all positive critical points
  $q_0$ and $q_1$ of $\tilde{U}$
  we have
  \begin{itemize}
  \item if $\ind(q_0)=\ind(q_1)$ then $\tilde{U}(q_0)=\tilde{U}(q_1)$,
  \item if $\ind(q_0)<\ind(q_1)$ then $\tilde{U}(q_0)<\tilde{U}(q_1)$.
  \end{itemize}
  I.e. $\tilde{U}$ behaves like a self-indexing Morse function on $N$,
  see \cite[Theorem 4.8]{mil65}.
  
  If $\tilde{U}$ has no positive local minimum
  we are done.
  It remains to consider
  the alternative case.
  With \eqref{ac1a} and the flow of $-X$
  Courants minmax argument
  as in \cite[Theorem 4.2]{str96}
  applies to the set of paths $\gamma$
  connecting a positive local minimum $\gamma(0)$
  with a point $\gamma(1)$ outside $N$.
  Therefore, there exists a positive saddle point
  of Morse index $1$.
  Because $\tilde{U}$ is
  self-indexing-like on $N$
  there exists an index $1$ 
  positive saddle point $q_1$
  which is connected with a
  positive local minimum $q_0$
  via exactly one flow line $T$ of $X$.
  
  Increasing $\tilde{U}(q_0)$
  and $\tilde{U}(q)$
  slightly for all index $1$
  positive saddle points $q\neq q_1$
  we can assume that $q_0$ and $q_1$
  are the critical points
  of the composition $W_{\!ac}$
  of the elementary action window sets
  $W_{\!ab}$ and $W_{\!bc}$.
  We claim that the first cancellation theorem
  \cite[Theorem 5.4]{mil65}
  applies:
  Let $D_R(q_0)$
  be the $n$-dimensional
  {\bf right-hand disc} of $q_0$
  which by definition is the union
  of all flow lines of $X$
  in $W_{\!ab}$ starting at $q_0$.
  Let $K_T$ be the compact neighbourhood
  of $T$ which is the union of
  $D_R(q_0)\cup T$
  with the set of flow lines of $X$
  in $W_{\!bc}$
  which ends in a small compact
  tubular neighbourhood of
  the hypersurface $S_R(q_1)$
  in $\{\tilde{U}=c\}$.
  Notice, that if a flow line in $W_{\!ac}$
  leaves $K_T$ once it never comes back.
  Following the arguments in
  \cite[p.~51ff]{mil65}
  we can alter the vector field $X$
  inside a neighbourhood of $T$ in $K_T$
  such that the flow of
  the vector field
  (again denoted by)
  $X$ yields a diffeomorphism
  from $W_{\!ac}$ to $\{\tilde{U}=a\}\times[0,c-a]$.
  With the construction in
  \cite[p.~54]{mil65}
  and $X(\tilde{U})=1$ on $Q\setminus\hat{Q}$
  there exists a new function
  $\tilde{U}$
  which coincides with the old one
  outside a compact set
  and on $\partial W_{\!ac}$
  such that $X$ is gradient-like
  for $\tilde{U}$.
  Because $\tilde{U}$
  increases along the flow of $X$
  the new function has no critical point in
  $W_{\!ac}$.
  Further, it can be assumed to be
  greater or equal than the old.
  
  Repeating this argument we can
  remove all positive local minima.
  This proves the lemma.
\end{proof}

\begin{proof}[{\bf Proof of Lemma \ref{constrofcs}}]
  Let $Y$ be a vector field on $Q$
  which does not vanish in a compact neighbourhood of $|c|$.
  As in the proof of Lemma \ref{construtilde}
  we consider a complete gradient-like vector field $X$
  for $\tilde{U}$.
  We can assume that the spaces of flow lines $\NN$ of $X$
  connecting positive critical points of $\tilde{U}$
  are manifolds of dimension $\leq n-1$,
  cf.\ \cite{sch93}.
  We can perturb $Y$ not to be tangent
  to $\NN$
  at the points of $|c|$.
  
  Following the flow of $Y$
  on a small interval around $0$
  we find a chain $\tilde{c}$ in $\MM$
  which is homotopic to $c$
  relative $\partial c$.
  The chain $\tilde{c}$ is obtained
  from $c$ by adding small loops induced by $Y$
  which start at points on $|c|$.
  We can assume that
  the loops starting on $|\partial c|$
  are constant;
  those starting on $|c|\cap\{U>-\delta/2\}$ not.
  Additionally,
  the intersections of every loop
  with $\NN$
  are uniformly finite.
  
  Let $Z$ be a complete vector field on $Q$
  which coincides with $-X$ on $N$
  and vanishes on $\{U\leq-\delta\}$.
  Applying the flow of $Z$
  to the loops representing $\tilde{c}$
  we get a $1$-parameter family $c_s$ in $\MM$
  starting at $c_0=\tilde{c}$
  with boundary $\partial c_s$ fixed.
  Moreover,
  all $x\in|c_s|$
  converge to arcs in $\{U\leq-\delta/2\}$
  in $C^{\infty}_{\loc}$
  outside the intersections
  with $\NN$.
  Therefore,
  we have
  $
  \UU(x)\leq\int\tilde{U}(x)\rmd t\leq-\delta/2
  $
  for all $x\in|c_s|$
  and $s$ sufficiently large.
\end{proof}


\begin{ack}
  First and foremost we would like to thank the Mathematische
  Forschungsinstitut Oberwolfach and its
  research in pairs program for its hospitality and the stimulating research
  environment.
  Second we like to thank
  Hansj\"org Geiges for his inspiring course on celestial mechanics in the
  winter term 2012/13 at the
  Universit\"at zu K\"oln
  and for providing us with the homological argument
  in Section \ref{reeb}.
  At the same time we like to thank
  Victor Bangert for pointing our attention towards the results
  on brake orbit solutions of mechanical Lagrangian systems.
  In addition, we would like to thank Alexander Lytchak
  for suggesting the use of harmonic coordinates.
  Further we would like to thank Alberto Abbondandolo, Peter Albers,
  Barney Bramham, and Federica Pasquotto for helpful discussions on
  this subject as well as Olaf M\"uller
  and Marc Nardmann for explaining us their result in \cite{muenar13}
  on conformal geometry.
\end{ack}



\begin{thebibliography}{10}

\bibitem{abb01}
  {\sc A. Abbondandolo},
  {\it Morse theory for {H}amiltonian systems},
  Chapman \& Hall/CRC Research Notes in Mathematics {\bf 425},
  Chapman \& Hall/CRC, Boca Raton, FL, (2001),
  xii+189.

\bibitem{abbschw06}
  {\sc A. Abbondandolo, M. Schwarz},
  Note on {F}loer homology and loop space homology
  in:
  {\it Morse theoretic methods in nonlinear analysis and in symplectic topology},
  NATO Sci. Ser. II Math. Phys. Chem. {\bf 217},
  Springer, Dordrecht, (2006),
  75--108.

\bibitem{bang85}
  {\sc V. Bangert},
  Geod\"atische {L}inien auf {R}iemannschen {M}annigfaltigkeiten,
  {\it Jahresber. Deutsch. Math.-Verein.} {\bf 87} (1985),
  39--66.

\bibitem{benci84}  
  {\sc V. Benci},
  Closed geodesics for the {J}acobi metric and periodic
  solutions of prescribed energy of natural {H}amiltonian
  systems,
  {\it Ann. Inst. H. Poincar\'e Anal. Non Lin\'eaire} {\bf 1} (1984),
  401--412.

\bibitem{bpv09}
  {\sc J. B. van den Berg, F. Pasquotto, R. C. A. M. Vandervorst},
  Closed characteristics on non-compact hypersurfaces in {$\Bbb R^{2n}$},
  {\it Math. Ann.} {\bf 343} (2009),
  247--284.

\bibitem{bprv13}
  {\sc J. B. van den Berg, F. Pasquotto, T. O. Rot, R. C. A. M. Vandervorst},
  Closed characteristics on non-compact mechanical contact manifolds,
  preprint,
  arXiv:1303.6461
  
\bibitem{bolo78}
  {\sc S. V. Bolotin},
  Libration motions of natural dynamical systems,
  {\it Vestnik Moskov. Univ. Ser. I Mat. Mekh.} {\bf 6} (1978),
  72--77.
  
\bibitem{bolkoz78}
  {\sc S. V. Bolotin, V. V. Kozlov},
  Libration in systems with many degrees of freedom,
  {\it Prikl. Mat. Mekh.} {\bf 42} (1978),
  245--250. 

\bibitem{cie98}
  {\sc K. Cieliebak},
  A geometric obstruction to the contact type property,
  {\it Math. Z.} {\bf 228} (1998),
  451--487.

\bibitem{cielfrau09}
  {\sc K. Cieliebak, U. A. Frauenfelder},
  A {F}loer homology for exact contact embeddings,
  {\it Pacific J. Math.} {\bf 239} (2009),
  251--316.

\bibitem{dld72}
  {\sc A. Dold},
  {\it Lectures on algebraic topology},
  Die Grundlehren der mathematischen Wissenschaften, {\bf 200},
  Springer-Verlag, New York, (1972),
  xi+377.

\bibitem{frauschl07}
  {\sc U. A. Frauenfelder, F. Schlenk},
  Hamiltonian dynamics on convex symplectic manifolds,
  {\it Israel J. Math.} {\bf 159} (2007),
  1--56.

\bibitem{geig08}
  {\sc H. Geiges},
  {\it An Introduction to Contact Topology},
  Cambridge Stud. Adv. Math. {\bf 109},
  Cambridge University Press, Cambridge (2008),
  xvi+440.

\bibitem{geizeh12}
  {\sc H. Geiges, K. Zehmisch},
  Symplectic cobordisms and the strong Weinstein conjecture,
  {\it Math. Proc. Cambridge Philos. Soc.} {\bf 153} (2012),
  261--279.   

\bibitem{gin05}
  {\sc V. L. Ginzburg},
  The {W}einstein conjecture and theorems of nearby and almost existence,
  in: {\it The breadth of symplectic and {P}oisson geometry},
  Progr. Math., {\bf 232},
  Birkh\"auser Boston,
  Boston, MA, (2005),
  139--172.

\bibitem{glzi83}
  {\sc H. Gluck, W. Ziller},
  Existence of periodic motions of conservative systems in:
  {\it Seminar on minimal submanifolds},
  Ann. of Math. Stud. {\bf 103},
  Princeton Univ. Press,
  Princeton, NJ, (1983),
  65--98.

\bibitem{gun89}
  {\sc M. G{\"u}nther},
  Zum {E}inbettungssatz von {J}. {N}ash,
  {\it Math. Nachr.} {\bf 144} (1989),
  165--187.

\bibitem{gun91}
  {\sc M. G{\"u}nther},
  Isometric embeddings of {R}iemannian manifolds, in:
  {\it Proceedings of the {I}nternational {C}ongress of {M}athematicians, {V}ol.\ {I}, {II}} ({K}yoto, 1990),
  Math. Soc. Japan,
  Tokyo (1991),
  1137--1143.

\bibitem{hebey99}
  {\sc E. Hebey},
  {\it Nonlinear analysis on manifolds: {S}obolev spaces and inequalities},
  Courant Lecture Notes in Mathematics, {\bf 5}.
  New York University Courant Institute of Mathematical Sciences,
  Providence, RI, (1999),
  x+309.
  
\bibitem{hovi88}
  {\sc H. Hofer, C. Viterbo},
  The {W}einstein conjecture in cotangent bundles and related results,
  {\it Ann. Scuola Norm. Sup. Pisa Cl. Sci. (4)} {\bf 15} (1988),
  411--445 (1989).

\bibitem{hoze94}
  {\sc H. Hofer, E. Zehnder},
  {\it Symplectic invariants and {H}amiltonian dynamics},
  Birkh\"auser Verlag, Basel, (1994),
  xiv+341.

\bibitem{kli78}
  {\sc W. Klingenberg},
  {\it Lectures on closed geodesics},
  Grundlehren der Mathematischen Wissenschaften {\bf 230},
  Springer-Verlag, Berlin (1978),
  x+227. 

\bibitem{lang99}
  {\sc S. Lang},
  {\it Fundamentals of differential geometry},
  Graduate Texts in Mathematics {\bf 191},
  Springer-Verlag, New York, (1999),
  xviii+535.

\bibitem{mil65}
  {\sc J. Milnor},
  {\it Lectures on the {$h$}-cobordism theorem},
  Princeton University Press,
  Princeton, N.J., (1965),
  v+116. 

\bibitem{muenar13}
  {\sc O. M\"uller, M. Nardmann},
  Every conformal class contains a metric of bounded geometry,
  preprint,
  arXiv:1303.5957

\bibitem{munk63}
  {\sc J. R. Munkres},
  {\it Elementary differential topology},
  Princeton University Press,
  Princeton, N.J., (1963),
  xi+107.

\bibitem{nash56}
  {\sc J. Nash},
  The imbedding problem for {R}iemannian manifolds,
  {\it Ann. of Math. (2)} {\bf 63} (1956),
  20--63.

\bibitem{off87}
  {\sc  D. Offin},
  A class of periodic orbits in classical mechanics,
  {\it J. Differential Equations} {\bf 66} (1987),
  90--117.  

\bibitem{pas12}
  {\sc  F. Pasquotto},
  A short history of the {W}einstein conjecture,
  {\it Jahresber. Dtsch. Math.-Ver.} {\bf 114} (2012),
  119--130.

\bibitem{rabi79}
  {\sc P. H. Rabinowitz},
  Periodic solutions of a Hamiltonian system on a prescribed energy surface,
  {\it J. Differential Equations} {\bf 33} (1979),
  336--352.

\bibitem{sch93}
  {\sc  M. Schwarz},
  {\it Morse homology},
  Progress in Mathematics {\bf 111},
  Birkh\"auser Verlag, Basel, (1993),
  x+235.

\bibitem{str96}
  {\sc M. Struwe},
  {\it Variational methods},
  Applications to nonlinear partial differential equations and
  Hamiltonian systems,
  Second edition,
  Ergebnisse der Mathematik und ihrer Grenzgebiete (3) {\bf 34},
  Springer-Verlag, Berlin, (1996),
  xvi+272. 

\bibitem{vit87}
  {\sc C. Viterbo},
  A proof of {W}einstein's conjecture in {${\bf R}^{2n}$},
  {\it Ann. Inst. H. Poincar\'e Anal. Non Lin\'eaire} {\bf 4} (1987),
  337--356. 

\bibitem{vit99}
  {\sc C. Viterbo}
  Functors and computations in {F}loer homology with applications. {I},
  {\it Geom. Funct. Anal.} {\bf 9} (1999),
  985--1033.

\end{thebibliography}
\end{document}